\theoremstyle{plain}
\newtheorem{thm}{Theorem}[section]
\newtheorem{lem}[thm]{Lemma}
\newtheorem{cor}[thm]{Corollary}
\newtheorem{prop}[thm]{Proposition}
\numberwithin{equation}{section}
\newcommand{\I}{\ensuremath{\mathbf I}}
\newcommand{\N}{\ensuremath{\mathbb N}}
\newcommand{\E}{\ensuremath{\mathbf E}}
\newcommand{\Z}{\ensuremath{\mathbb Z}}
\DeclareMathOperator{\z}{Z}
\DeclareMathOperator{\ab}{ab}
\DeclareMathOperator{\lcm}{lcm}
\DeclareMathOperator{\Inn}{Inn}
\DeclareMathOperator{\Out}{Out}
\DeclareMathOperator{\GL}{GL}
\DeclareMathOperator{\SL}{SL}
\DeclareMathOperator{\Aut}{Aut}
\DeclareMathOperator{\ord}{ord}
\DeclareMathOperator{\id}{id}
\begin{document}
\title{On the Index of\\Congruence Subgroups of $\Aut(F_n)$}
\thanks{The first author was supported by the Thomas Holloway Scholarship. The second author was supported by the DFG (German Research Foundation). The authors would like to thank F.~Grunewald for proposing this very interesting topic and B.~Klopsch for many useful discussions.}
\author{Daniel Appel}
\address{Mathematisches Institut der Heinrich- Heine- Universit\"at\\ 40225 D\"usseldorf\\ Germany.}
\email[D.~Appel]{Daniel.Appel@uni-duesseldorf.de}
\author{Evija Ribnere} 
\email[E.~Ribnere]{Evija.Ribnere@uni-duesseldorf.de}

\maketitle
\pagestyle{plain}
\begin{abstract}
For an epimorphism $\pi : F_n \rightarrow G$ of the free group $F_n$ onto a finite group $G$ we call $\Gamma(G,\pi)=\{ \varphi \in \Aut(F_n) \mid \pi\varphi~=~\pi \}$ the standard congruence subgroup of $\Aut(F_n)$ associated to $G$ and $\pi$.
In the case $n=2$ we present formulas for the index of $\Gamma(G,\pi)$ where $G$ is abelian or
dihedral. Moreover, we show that congruence subgroups associated to dihedral groups provide a family of subgroups of arbitrary large index in $\Aut(F_2)$ generated by a fixed number of elements. This implies that finite index subgroups of $\Aut(F_2)$ cannot be written as free products.
\end{abstract}

\section{Introduction}
\subsection{Main Results}
Let $F_n$ be the free group on $n$ generators and $\Aut(F_n)$ its
group of automorphisms. Moreover, let $\pi: F_n \rightarrow G$ be an
epimorphism of $F_n$ onto a finite group $G$ and let $R$ be its
kernel. As in \cite{GL} we define
$$\Gamma(R) := \{ \varphi \in \Aut(F_n) \mid \varphi(R) = R \}.$$
Every $\varphi \in \Gamma(R)$ induces an automorphism of $F_n / R \cong G$. We call
\begin{eqnarray*}
\Gamma(G,\pi) &:=& \{ \varphi \in \Gamma(R) \mid \varphi \mbox{ induces the identity on } F_n / R \}\\
& = &\{ \varphi \in \Aut(F_n) \mid  \pi \varphi =
\pi\}
\end{eqnarray*}
the \emph{standard congruence subgroup of $\Aut(F_n)$ associated to
$G$ and $\pi$}. These subgroups are of finite index in $\Aut(F_n)$ (see Subsection~\ref{secAct}). A subgroup of $\Aut(F_n)$ containing some $\Gamma(G,\pi)$ is
called a \emph{congruence subgroup of $\Aut(F_n)$}. We denote by $\Aut^+(F_n)$ the special automorphism group of $F_n$ (see Subsection~\ref{comments} for details) and  write $\Gamma^+(G,\pi) := \Gamma(G,\pi) \cap \Aut^+(F_n)$. The term \emph{congruence subgroup of} $\Aut^+(F_n)$ is defined in the obvious way.

In \cite{GL} Grunewald and Lubotzky use the groups $\Gamma(G,\pi)$ to construct linear representations of the automorphism group $\Aut(F_n)$. In their concluding Section 9.4 they present, for some explicit $G$, the indices of the groups $\Gamma^+(G,\pi)$ in $\Aut^+(F_n)$, which are determined by MAGMA computations. However, their only general result in this context is
$$[\Aut^+(F_n) : \Gamma^+(\Z / 2 \Z,\pi)] = 2^n - 1.$$

In this paper we provide a first step towards a systematic study of the groups $\Gamma^+(G,\pi)$ and especially their indices in $\Aut^+(F_n)$. We focus on the case $n = 2$ and $G$ abelian or dihedral. Our main results are

\begin{thm}\label{thm-abel}
Let $G$ be a finite abelian group and $\pi: F_2 \rightarrow G$ an arbitrary epimorphism. Writing $G  \cong \Z/m\Z \times \Z/n\Z$ with $n \mid m$ one has
$$[\Aut^+(F_2) : \Gamma^+(G,\pi)] = n m^2 \prod_{p \mid m} (1-\frac{1}{p^2})$$
 where the product runs over all primes $p$ dividing $m$.
\end{thm}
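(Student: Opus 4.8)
The plan is to interpret the index as an orbit size. The group $\Aut(F_2)$ acts on the set $\mathrm{Epi}(F_2,G)$ of epimorphisms by $\varphi\cdot\pi=\pi\varphi$, and by definition the stabilizer of $\pi$ inside $\Aut^+(F_2)$ is exactly $\Gamma^+(G,\pi)$; hence by orbit--stabilizer $[\Aut^+(F_2):\Gamma^+(G,\pi)]$ equals the size of the $\Aut^+(F_2)$-orbit of $\pi$. Since $G$ is abelian, every $\pi$ factors uniquely through the abelianization $F_2^{\mathrm{ab}}\cong\Z^2$, so $\mathrm{Epi}(F_2,G)$ is identified with $\mathrm{Epi}(\Z^2,G)$, i.e.\ with the set of ordered generating pairs $(g_1,g_2)$ of $G$, and the action factors through the natural map $\Aut^+(F_2)\to\SL_2(\Z)$, which is surjective. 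Thus I would first reduce the problem to computing the size of the $\SL_2(\Z)$-orbit of the generating pair $\bar\pi=(g_1,g_2)$ under the right action $(g_1,g_2)\cdot A=(a_{11}g_1+a_{21}g_2,\ a_{12}g_1+a_{22}g_2)$.

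Next I would pass to a finite and then to a local computation. As $G$ has exponent $m$, the action factors through $\SL_2(\Z/m\Z)$, and $\SL_2(\Z)\to\SL_2(\Z/m\Z)$ is onto, so the orbit under $\SL_2(\Z)$ agrees with the orbit under $\SL_2(\Z/m\Z)$. By the Chinese Remainder Theorem $\SL_2(\Z/m\Z)\cong\prod_p\SL_2(\Z/p^{a_p}\Z)$ and $G\cong\prod_p G_p$ with $G_p\cong\Z/p^{a_p}\Z\times\Z/p^{b_p}\Z$ (writing $m=\prod_p p^{a_p}$, $n=\prod_p p^{b_p}$, $b_p\le a_p$); a pair generates $G$ iff it generates each $G_p$, the action is the product action, and so the orbit size is the product of the local orbit sizes. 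It therefore suffices to compute, for $G_p=\Z/p^{a}\Z\times\Z/p^{b}\Z$ with $a\ge b\ge0$, the size of the $\SL_2(\Z/p^{a}\Z)$-orbit of a generating pair.

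For the local count I would use orbit--stabilizer again. First, $\GL_2(\Z)$ acts transitively on $\mathrm{Epi}(\Z^2,G_p)$ by Smith normal form, and $\GL_2(\Z)\to\GL_2(\Z/p^{a}\Z)$ is onto; hence all generating pairs are $\GL_2(\Z/p^{a}\Z)$-equivalent, so their $\SL_2$-stabilizers are conjugate and of equal order, which already yields independence of the index from the choice of $\pi$. It then remains to compute the stabilizer of the standard pair $(g_1,g_2)=((1,0),(0,1))$. The stabilizing conditions read $a_{11}\equiv1$, $a_{12}\equiv0\ (\mathrm{mod}\ p^{a})$ and $a_{21}\equiv0$, $a_{22}\equiv1\ (\mathrm{mod}\ p^{b})$; imposing $\det A=1$ over $\Z/p^{a}\Z$ forces $a_{11}=a_{22}=1$, $a_{12}=0$, leaving $a_{21}$ to range over the multiples of $p^{b}$ in $\Z/p^{a}\Z$, i.e.\ exactly $p^{a-b}$ choices. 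Using $|\SL_2(\Z/p^{a}\Z)|=p^{3a}(1-p^{-2})$, the local orbit has size $p^{3a}(1-p^{-2})/p^{a-b}=p^{2a+b}(1-p^{-2})$. Conceptually this reflects the $\SL_2$-invariant $g_1\wedge g_2\in\wedge^2 G_p\cong\Z/p^{b}\Z$, whose unit values index the $\phi(p^{b})$ orbits of equal size.

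Finally I would assemble the product over $p\mid m$: since $\prod_{p\mid m}p^{2a_p+b_p}=m^2 n$ (using $n\mid m$), the total index becomes $nm^2\prod_{p\mid m}(1-p^{-2})$, as claimed. The main obstacle is the local stabilizer computation, where the two distinct moduli $p^{a}$ and $p^{b}$ interact with the determinant-one constraint; everything else rests on standard background facts (surjectivity of $\Aut^+(F_2)\to\SL_2(\Z)$ and of $\SL_2(\Z)\to\SL_2(\Z/m\Z)$, and transitivity of $\GL_2(\Z)$ on $\mathrm{Epi}(\Z^2,G)$ via Smith normal form) that I would invoke rather than reprove.
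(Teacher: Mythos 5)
Your strategy --- compute the index as the size of the $\Aut^+(F_2)$-orbit of $\pi$, push the action down to $\SL_2(\Z/m\Z)$ via $\rho$ and reduction mod $m$, split by the Chinese Remainder Theorem, and count local orbits by orbit--stabiliser --- is viable and genuinely different from the paper's, which instead identifies $\rho(\Gamma^+(G,\pi))$ with the classical congruence subgroup $\Gamma(m,n)$ for one convenient $\pi$ and computes $[\SL_2(\Z):\Gamma(m,n)]$ by a coset count inside $\Gamma^1(m)$. Your local stabiliser computation (order $p^{a-b}$) and the final assembly are correct and reproduce the stated formula. However, the step where you establish that all generating pairs have $\SL_2$-stabilisers of equal order rests on two false claims. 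First, $\GL_2(\Z)\rightarrow\GL_2(\Z/p^a\Z)$ is \emph{not} surjective in general: every matrix in the image has determinant $\pm 1$ modulo $p^a$, so for instance nothing of determinant $2$ in $\GL_2(\Z/5\Z)$ is hit. Second, and for the same reason, $\GL_2(\Z)$ does \emph{not} act transitively on $\mathrm{Epi}(\Z^2,G_p)$: for $G_p=(\Z/5\Z)^2$ the generating pairs are the $480$ elements of $\GL_2(\Z/5\Z)$, acted on freely by right multiplication by the index-two subgroup of determinant $\pm 1$, giving two orbits. Smith normal form over $\Z$ only yields transitivity of $\Aut(G_p)\times\GL_2(\Z)$, not of $\GL_2(\Z)$ alone. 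Since $\SL_2(\Z/p^a\Z)$ is genuinely intransitive on generating pairs (you yourself note the orbits indexed by the unit values of $g_1\wedge g_2$), some argument for equality of the orbit sizes is indispensable --- both for the independence of the index from $\pi$ and for applying your stabiliser count for the standard pair to an arbitrary $\pi$ --- and as written that argument has a hole.

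The hole is repairable without changing your architecture: what you actually need is that $\GL_2(\Z/p^a\Z)$ acts transitively on epimorphisms $(\Z/p^a\Z)^2\rightarrow G_p$, and this is true; run Smith normal form over the local ring $\Z/p^a\Z$ (or over $\Z_p$), where, unlike over $\Z$, every unit is realised as a determinant of an invertible matrix, so the $\Aut(G_p)$-part of the normalisation can be absorbed into $\GL_2$. Then the $\SL_2$-stabilisers of any two generating pairs are conjugate in $\GL_2(\Z/p^a\Z)$, hence of equal order. (Alternatively, check directly that the fibres of $(g_1,g_2)\mapsto g_1\wedge g_2$ over units are equinumerous, or follow the paper's Lemma~\ref{LemTrans}, which proves transitivity of $\Aut^+(F_2)\times\Aut(G)$ on $\E_2(G)$ by an explicit Euclidean algorithm and concludes that the subgroups $\Gamma^+(G,\pi)$ themselves are all conjugate.) With that repair your proof is complete and rather more self-contained than the paper's, which quotes the index of $\Gamma^1(m)$ from the literature; the paper's route, in exchange, exhibits $\rho(\Gamma^+(G,\pi))$ explicitly as $\Gamma(m,n)$, a description it reuses in the dihedral case.
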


\begin{thm}\label{thm-dieder}
Let $\pi: F_2 \rightarrow D_n$ be an arbitrary epimorphism of $F_2$ onto the dihedral group $D_n$.
Then
$$[\Aut^+(F_2) : \Gamma^+(D_n,\pi)] =6n.$$
Moreover, $\Gamma^+(D_n,\pi)$ is generated by four elements.
\end{thm}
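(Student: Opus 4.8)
The plan is to realise the index as the length of the orbit of $\pi$ under $\Aut^+(F_2)$ and to organise the computation along the exact sequence
$$1 \longrightarrow \Inn(F_2) \longrightarrow \Aut^+(F_2) \xrightarrow{\ \rho\ } \SL_2(\Z) \longrightarrow 1,$$
where $\rho$ records the action on the abelianisation $F_2^{\ab}\cong\Z^2$ and $\ker\rho=\Inn(F_2)$ by Nielsen's theorem. Writing $\Gamma=\Gamma(D_n,\pi)$ and $\bar\Gamma=\rho(\Gamma^+)$, the elementary identity
$$[\Aut^+(F_2):\Gamma^+] = [\SL_2(\Z):\bar\Gamma]\cdot[\Inn(F_2):\Inn(F_2)\cap\Gamma]$$
reduces the theorem to two separate index computations, one inside $\Inn(F_2)$ and one inside $\SL_2(\Z)$.

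First I would treat the inner factor. A conjugation $c_g$ lies in $\Gamma$ if and only if $\pi(g)$ is central in $D_n$, so $\Inn(F_2)\cap\Gamma$ corresponds to $\pi^{-1}(Z(D_n))$ and
$$[\Inn(F_2):\Inn(F_2)\cap\Gamma]=[D_n:Z(D_n)] = \begin{cases} 2n, & n \text{ odd},\\ n, & n \text{ even}.\end{cases}$$
For the $\SL_2(\Z)$ factor, note that $\pi\varphi=\pi$ forces $\bar\pi\circ\rho(\varphi)=\bar\pi$ on abelianisations, so $\bar\Gamma$ is contained in the stabiliser $\mathrm{Stab}(\bar\pi)$ of the induced surjection $\bar\pi\colon\Z^2\to D_n^{\ab}$. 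Since $D_n^{\ab}$ is $\Z/2\Z$ for $n$ odd and $\Z/2\Z\times\Z/2\Z$ for $n$ even, reducing modulo $2$ identifies this stabiliser with the stabiliser in $\SL_2(\F_2)\cong S_3$ of a nonzero functional, respectively of a basis; a direct count gives index $3$ for $n$ odd and index $6$ for $n$ even. Multiplying the two factors yields $6n$ in both parities, which is the asserted index --- provided the stabiliser bound is sharp.

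The main obstacle is exactly this sharpness, i.e. the reverse inclusion $\mathrm{Stab}(\bar\pi)\subseteq\bar\Gamma$: every $A\in\SL_2(\Z)$ fixing $\bar\pi$ must be realised by an automorphism $\varphi$ with $\pi\varphi=\pi$. I would prove this by lifting $A$ to an arbitrary $\psi\in\Aut^+(F_2)$ and then correcting $\psi$ by an inner automorphism so that $\pi\psi$ becomes $\pi$. The correction is controlled by the characteristic rotation subgroup $C_n=\langle r\rangle\le D_n$: because $A$ fixes $\bar\pi$, the automorphism $\psi$ preserves the index-two subgroup $H=\pi^{-1}(C_n)\le F_2$ (the kernel of the $C_2$-quotient map, which factors through $\bar\pi$), and on $H$ the map $\pi$ takes values in the cyclic group $C_n$, so the obstruction to matching $\pi\psi$ with $\pi$ reduces to a computation in an abelian quotient that can be resolved by conjugation in $D_n$, in the spirit of the abelian case of Theorem~\ref{thm-abel}. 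Establishing this lifting statement cleanly, uniformly in $n$ and independently of the chosen epimorphism, is the technical heart of the argument.

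Finally, for the generation statement I would exploit the same extension $1\to N\to\Gamma^+\xrightarrow{\rho}\bar\Gamma\to1$ with $N=\Inn(F_2)\cap\Gamma$. The quotient $\bar\Gamma$ has index $3$ or $6$ in $\SL_2(\Z)$ and is therefore generated by a bounded number of explicit elements; the kernel $N$, although of unbounded rank, is normal in $\Gamma^+$ and is generated as a $\Gamma^+$-group by the inner automorphisms attached to the three defining relators of a presentation of $D_n$ on the generating pair $(\pi(x_1),\pi(x_2))$, say $D_n=\langle r,s\mid r^n,\,s^2,\,(sr)^2\rangle$. Lifting generators of $\bar\Gamma$ and adjoining these few relator-conjugations, I would show that the generators interact so as to collapse to a total of four; the point is that the $\bar\Gamma$-action on $N$ lets one express all but a fixed number of the relator-conjugations in terms of the lifts. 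Verifying that exactly four elements suffice, uniformly in $n$, is the delicate bookkeeping step of this part.
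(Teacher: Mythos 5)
Your overall decomposition is exactly the paper's: the index factors as $[\SL_2(\Z):\rho(\Gamma^+)]\cdot[D_n:\z(D_n)]$, the inner factor is computed correctly, and the upper bound $\rho(\Gamma^+(D_n,\pi))\leq \Gamma_1(2)$ (resp.\ $\Gamma(2)$) via the abelianisation $D_n^{\ab}$ is the same reduction the paper uses. But both substantive steps are left unproved, and the first is attacked from the harder direction. For sharpness you propose to show that \emph{every} $A$ stabilising $\bar\pi$ lifts into $\Gamma^+(D_n,\pi)$, by lifting arbitrarily and then correcting by an inner automorphism; this requires proving that $\pi\psi$ differs from $\pi$ by an inner automorphism of $D_n$ for an arbitrary lift $\psi$ of $A$, which is precisely the content you defer to ``a computation in an abelian quotient that can be resolved by conjugation.'' The paper sidesteps this entirely: it exhibits four explicit automorphisms $u^2$, $v^n$, $\alpha_x^{-1}v^2$, $\alpha_x^{-1}(u^{-1}v)^3$ lying in $\Gamma^+(D_n,\pi_0)$ whose images $\left(\begin{smallmatrix}1&0\\2&1\end{smallmatrix}\right)$, $\left(\begin{smallmatrix}1&n\\0&1\end{smallmatrix}\right)$, $\left(\begin{smallmatrix}1&2\\0&1\end{smallmatrix}\right)$, $\left(\begin{smallmatrix}-1&0\\0&-1\end{smallmatrix}\right)$ already generate $\Gamma_1(2)$ resp.\ $\Gamma(2)$; combined with the abelianisation upper bound this pins down the image with no lifting argument at all. (One also needs the transitivity of the $\Aut^+(F_2)$-action on $\mathbf{R}_2(D_n)$, Lemma~\ref{Dn-pi-unabh}, to reduce the general $\pi$ to the single $\pi_0$.)

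The second gap is more serious. Your claim that $N=\Inn(F_2)\cap\Gamma$ is generated \emph{as a $\Gamma^+$-group} by the inner automorphisms attached to the three relators is not justified: $N=\{\alpha_w \mid \pi(w)\in\z(D_n)\}$ is the normal closure of those relator-automorphisms under conjugation by \emph{all} of $\Inn(F_2)$, but the conjugating elements $\alpha_u$ with $\pi(u)\notin\z(D_n)$ do not lie in $\Gamma^+$, so a priori you only obtain the normal closure under a smaller group and must check that the lifts of the $\bar\Gamma$-generators supply the missing conjugates. Moreover, even granting normal generation by finitely many elements, that does not by itself yield \emph{generation} by four elements. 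The paper resolves this by explicit computation: Reidemeister--Schreier applied to the coset decomposition $T_2\cap\Gamma^+\backslash T_2\cong D_n/\z(D_n)$ produces an explicit free generating set of $N$ (of rank growing linearly in $n$), and each of these free generators, together with the lifts of the $\SL_2(\Z)$-generators, is then written as an explicit word in the same four automorphisms listed above. Without that explicit rewriting, or some substitute for it, the bound of four generators is not established.
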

The Reidemeister method (see for example \cite{MKS}) implies
\begin{cor}
Any group commensurable with $\Aut(F_2)$, contains subgroups of arbitrary large index, generated by a fixed number of elements.

In particular, finite index subgroups of $\Aut(F_2)$ cannot be written as free products.
\end{cor}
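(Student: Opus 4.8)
The plan is to read statement~1 directly off Theorem~\ref{thm-dieder} and then deduce statement~2 from statement~1 by showing that a nontrivial free product has ``positive rank gradient'' and hence cannot contain boundedly generated subgroups of unbounded index. For statement~1 I would start from the family $\Gamma^+(D_n,\pi)\le\Aut^+(F_2)$ of Theorem~\ref{thm-dieder}: it has index $6n$ and is generated by four elements. Since $\Aut^+(F_2)$ has finite index in $\Aut(F_2)$, the index $[\Aut(F_2):\Gamma^+(D_n,\pi)]$ is finite and tends to infinity with $n$, so $\Aut(F_2)$ itself already contains four-generated subgroups of arbitrarily large index. To pass to an arbitrary group $H$ commensurable with $\Aut(F_2)$, fix a group $K$ embedding as a finite-index subgroup in both $H$ and $\Aut(F_2)$, put $j=[\Aut(F_2):K]$, and consider $V_n:=\Gamma^+(D_n,\pi)\cap K$. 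On the one hand $[\Gamma^+(D_n,\pi):V_n]\le j$, so the Reidemeister--Schreier rewriting bounds the number of generators of $V_n$ by $1+[\Gamma^+(D_n,\pi):V_n]\,(4-1)\le 1+3j$, \emph{independently of $n$}; on the other hand $[K:V_n]\ge[\Aut(F_2):\Gamma^+(D_n,\pi)]/j\to\infty$. Transporting $V_n$ through an isomorphism $K\cong K'\le H$ yields subgroups of $H$ of unbounded index and uniformly bounded rank, which is statement~1.

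For statement~2, let $L\le\Aut(F_2)$ have finite index; as a common finite-index subgroup it is commensurable with $\Aut(F_2)$, so by statement~1 it contains subgroups of arbitrarily large index with a uniform bound on the number of generators. I would assume, towards a contradiction, that $L=A*B$ is a nontrivial free product. First, $L\cap\Inn(F_2)$ has finite index in $\Inn(F_2)\cong F_2$ and hence is a nonabelian free group, so $L$ is not virtually cyclic; this rules out $L\cong\Z/2*\Z/2$ and so forces that $A$ and $B$ are not both of order two. Since $L$ is finitely generated, Grushko's theorem makes $A$ and $B$ finitely generated as well.

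The heart of the matter is a lower bound on the rank of \emph{every} finite-index subgroup of $A*B$. I would run the Bass--Serre/Kurosh machinery (the ``Reidemeister method'' for free products of \cite{MKS}): a finite-index subgroup $U\le A*B$ of index $k$ acts on the Bass--Serre tree with finite quotient graph of groups having trivial edge groups, so $U\cong F_\rho*(*_v U_v)$ with $\rho$ the first Betti number of the quotient graph and the $U_v$ finite-index subgroups of conjugates of $A$ or $B$. Grushko gives $d(U)=\rho+\sum_v d(U_v)$, and counting edges by $k=\sum_{A\text{-vert}}[A:U_v]=\sum_{B\text{-vert}}[B:U_w]$ rewrites this as
\[
d(U)=1+\tfrac12\sum_{A\text{-vert}}\bigl([A:U_v]-2+2d(U_v)\bigr)+\tfrac12\sum_{B\text{-vert}}\bigl([B:U_w]-2+2d(U_w)\bigr).
\]
The key elementary inequality is that for a finitely generated group $C\neq\Z/2$ and a finite-index subgroup $D\le C$ one has $[C:D]-2+2d(D)\ge\tfrac13[C:D]$ (checked by splitting into $D$ trivial, proper, or all of $C$, and treating infinite $C$ via $d(D)\ge1$). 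Applying it on the factor that is not $\Z/2$ yields $d(U)\ge 1+\tfrac16 k$, so the rank of finite-index subgroups of $L$ grows at least linearly in the index, contradicting the bounded-rank subgroups of unbounded index furnished by statement~1.

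The main obstacle I anticipate is exactly this uniform lower bound: it must hold for \emph{all} finite-index subgroups, not merely the torsion-free ones, so the trade-off between the free rank $\rho$ and the vertex-group ranks $d(U_v)$ has to be controlled simultaneously; the averaged edge count above is arranged so that every local contribution is nonnegative and bounded below by a positive multiple of the local index, which is what forces linear growth. The remaining work is bookkeeping: ensuring connectedness of the quotient graph, that the vertex groups really are finite-index in conjugates of $A$ or $B$, and that the degenerate case $A=B=\Z/2$ has genuinely been excluded.
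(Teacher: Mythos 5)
Your proposal is correct and follows exactly the route the paper indicates: statement~1 is obtained from Theorem~\ref{thm-dieder} by passing to a common finite-index subgroup and bounding generators via Reidemeister--Schreier, and statement~2 is the Kurosh subgroup theorem argument, for which the paper only cites \cite{MKS} and \cite{DR} without details. Your write-up simply supplies the bookkeeping (the edge count $k=\sum_v[A:U_v]$, the inequality $[C:D]-2+2d(D)\ge\tfrac13[C:D]$, and the exclusion of $\Z/2*\Z/2$ via $L\cap\Inn(F_2)$) that the paper leaves implicit, and all of it checks out.
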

The fact that $\Aut(F_2)$ finite index subgroups of $\Aut(F_2)$ cannot be written as free products follows from the Kurosh Subgroup Theorem \cite{DR}. Observe that the special linear group $\SL_2(\Z)$ behaves in this respect very differently from the special automorphism group $\Aut^+(F_2)$. For a bounded number of generators we cannot obtain subgroups of arbitrary large index in $\SL_2(\Z)$. Moreover, $\SL_2(\Z)$ contains the finite index subgroup $\langle \left(\begin{smallmatrix}1 & 2 \\ 0 & 1 \end{smallmatrix}\right), \left(\begin{smallmatrix}1 & 0 \\ 2 & 1 \end{smallmatrix}\right) \rangle$ which is free of rank $2$.

\subsection{Comparison with $\SL_n(\Z)$}
Let us describe the analogy between congruence subgroups of $\Aut^+(F_n)$ and congruence subgroups of $\SL_n(\Z)$. A group of the form
$$\Gamma(m)= \{ \phi \in \SL_n(\Z)\mid \phi \equiv_m \I_n \},
$$ where $m \in \N$ and $\I_n$ denotes the identity matrix, is called a \emph{principal congruence subgroup} of $\SL_n(\Z)$. A subgroup of $\SL_n(\Z)$ containing some $\Gamma(m)$ is called a \emph{congruence subgroup}. Note that $\SL_n(\Z)$ is in fact a subgroup of index $2$ of the automorphism group $\GL_n(\Z)$ of the free abelian group $\Z^n$. Consider the natural epimorphism $\Z^n \rightarrow (\Z/m\Z)^n$. Its kernel $(m\Z)^n$ is invariant under every automorphism $\phi \in \SL_n(\Z)$, so that every $\phi \in \SL_n(\Z)$ induces an automorphism of $(\Z / m\Z)^n$. One easily sees that
$$\Gamma(m)=\{ \phi \in \SL_n(\Z) \mid \phi \mbox{ induces the identity on } (\Z/m\Z)^n  \}.$$

\subsection{Detailed Discussion of Results and Strategies of the Proofs}\label{comments}
The automorphism group $\Aut(F_n)$ has  a well-known surjective representation
$$\rho : \Aut(F_n) \rightarrow \Aut(F_n / F'_n) \cong \GL_n(\Z),$$
where $F'_n$ denotes the commutator subgroup of $F_n$.  Its kernel is denoted by $T_n$ and called the
\emph{Torelli group}. As one classically considers $\SL_n(\Z)$ instead of $\GL_n(\Z)$, we shall focus on the \emph{special automorphism group}
$\Aut^+(F_n) := \rho^{-1}(\SL_n(\Z))$, which is a subgroup of index
2 in $\Aut(F_n)$ (see for example \cite{MKS}). We also
set
$$\Gamma^+(G,\pi) := \Gamma(G,\pi) \cap \Aut^+(F_n).$$
This is a subgroup of index at most 2 in $\Gamma(G,\pi)$. Note that $T_n \leq \Aut^+(F_n)$.

Using the representation $\rho$ we can write the index of $\Gamma^+(G,\pi)$ in
$\Aut^+(F_n)$
as a product of two other indices which are easier to compute.
 See Subsection~\ref{index-allgemein} for the proof.
\begin{prop}\label{intr-prop-index}Let $\pi : F_n \rightarrow G$ be an epimorphism of $F_n$ onto a finite group $G$. Then
$$[\Aut^+(F_n) : \Gamma^+(G,\pi)] = [\SL_n(\Z) : \rho(\Gamma^+(G,\pi))] \cdot [T_n : T_n \cap \Gamma^+(G,\pi)].$$
\end{prop}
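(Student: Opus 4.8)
The plan is to read the desired identity off the short exact sequence
$$1 \longrightarrow T_n \longrightarrow \Aut^+(F_n) \longrightarrow \SL_n(\Z) \longrightarrow 1,$$
where the surjection is $\rho$. This sequence is available: by definition $\Aut^+(F_n) = \rho^{-1}(\SL_n(\Z))$ and $\rho$ is onto $\GL_n(\Z)$, so $\rho$ maps $\Aut^+(F_n)$ onto $\SL_n(\Z)$, and because $T_n \leq \Aut^+(F_n)$ the kernel of this restriction is exactly $T_n$. Set $H := \Gamma^+(G,\pi)$ and $\bar H := \rho(H)$. The first thing I would record is the elementary preimage identity $\rho^{-1}(\bar H) = H\,T_n$: indeed $\rho(x) \in \bar H$ holds iff $\rho(h^{-1}x) = 1$ for some $h \in H$, i.e.\ iff $x \in h\,T_n$ for some $h \in H$. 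In particular $H\,T_n$ is a subgroup of $\Aut^+(F_n)$ that contains both $H$ and the normal subgroup $T_n$.

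Next I would apply multiplicativity of the index to the chain $H \leq H\,T_n \leq \Aut^+(F_n)$, giving
$$[\Aut^+(F_n) : H] = [\Aut^+(F_n) : H\,T_n]\cdot[H\,T_n : H],$$
and then identify the two factors separately. For the first factor, $H\,T_n = \rho^{-1}(\bar H)$ is the full $\rho$-preimage of $\bar H$, so $\rho$ induces a bijection between the left cosets of $H\,T_n$ in $\Aut^+(F_n)$ and the left cosets of $\bar H$ in $\SL_n(\Z)$; hence $[\Aut^+(F_n) : H\,T_n] = [\SL_n(\Z) : \rho(H)]$. For the second factor, normality of $T_n$ shows that every left coset of $H$ in $H\,T_n$ has a representative in $T_n$ (write $h t = (h t h^{-1})\,h$ with $h t h^{-1} \in T_n$), and two elements of $T_n$ lie in the same coset of $H$ precisely when they differ by an element of $T_n \cap H$; this is the standard second isomorphism theorem in the form $[H\,T_n : H] = [T_n : T_n \cap H]$. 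Substituting both identities into the displayed product yields the claim.

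There is no deep obstacle here; the statement is a bookkeeping consequence of the above exact sequence. The points that warrant care are purely formal: confirming that the restriction of $\rho$ to $\Aut^+(F_n)$ really is surjective with kernel $T_n$, that $H\,T_n$ equals the preimage $\rho^{-1}(\bar H)$, and that the second isomorphism theorem is applied in the correct direction (only normality of $T_n$, not of $H$, is required). Finiteness of all three indices is automatic, since $\Gamma(G,\pi)$ and therefore $\Gamma^+(G,\pi)$ has finite index in $\Aut(F_n)$, so no well-definedness issues arise.
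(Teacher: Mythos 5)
Your proof is correct and follows essentially the same route as the paper, which reads the formula off the exact sequence $1 \to T_n \to \Aut^+(F_n) \stackrel{\rho}{\to} \SL_n(\Z) \to 1$ via a general lemma on commutative diagrams with exact rows. In fact, your decomposition through the chain $H \leq H\,T_n \leq \Aut^+(F_n)$, together with the identities $H\,T_n = \rho^{-1}(\rho(H))$ and $[H\,T_n : H] = [T_n : T_n \cap H]$, is precisely the diagram chase whose details the paper defers to the first author's thesis.
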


For the reminder we consider the case $n=2$. A
classical result of Nielsen (see for example  \cite{MKS}) says that
in this case the Torelli group is exactly the group of inner automorphisms, i.e.,\ $T_2 = \Inn(F_2).$ 
This enables us to prove in Section~\ref{sec-preliminaries} that the quotient group $T_2 / T_2 \cap \Gamma^+(G,\pi)$ is isomorphic to $G / \z(G)$, where $\z(G)$ denotes
the center of $G$.
Hence, for $n=2$ we can derive the following from Proposition
\ref{intr-prop-index}.
\begin{cor}\label{intr-cor-index-n2}Let $\pi : F_2 \rightarrow G$ be an epimorphism of $F_2$ onto a finite group $G$. Then
$$[\Aut^+(F_2) : \Gamma^+(G,\pi)] = [\SL_2(\Z) : \rho(\Gamma^+(G,\pi))] \cdot [G : \z(G)].$$
\end{cor}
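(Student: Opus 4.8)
The plan is to specialize Proposition~\ref{intr-prop-index} to the case $n=2$ and then replace the Torelli factor $[T_2 : T_2 \cap \Gamma^+(G,\pi)]$ by the purely group-theoretic quantity $[G : \z(G)]$. The first step is immediate, since Proposition~\ref{intr-prop-index} already yields
$$[\Aut^+(F_2) : \Gamma^+(G,\pi)] = [\SL_2(\Z) : \rho(\Gamma^+(G,\pi))] \cdot [T_2 : T_2 \cap \Gamma^+(G,\pi)].$$
Thus the entire content of the corollary is the identity $[T_2 : T_2 \cap \Gamma^+(G,\pi)] = [G : \z(G)]$. For this I would invoke Nielsen's theorem $T_2 = \Inn(F_2)$, which lets me parametrize the elements of $T_2$ by inner automorphisms $c_g : x \mapsto gxg^{-1}$, and then exhibit an explicit surjection onto $G/\z(G)$ whose kernel is exactly $T_2 \cap \Gamma^+(G,\pi)$.

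Concretely, I would define $\Phi : T_2 \to G/\z(G)$ by $\Phi(c_g) = \pi(g)\z(G)$. Since $F_2$ has trivial center, the assignment $g \mapsto c_g$ is an isomorphism $F_2 \cong \Inn(F_2) = T_2$, and because $c_{gh} = c_g \circ c_h$ it is a genuine homomorphism rather than an anti-homomorphism; composing with $\pi$ and the projection $G \to G/\z(G)$ therefore produces a homomorphism $\Phi$, which is surjective because $\pi$ is. The crux is the kernel: an element $c_g$ lies in $\Gamma^+(G,\pi)$ precisely when $\pi c_g = \pi$, that is when $\pi(g)\pi(x)\pi(g)^{-1} = \pi(x)$ for all $x \in F_2$; as $\pi$ is onto, this holds if and only if $\pi(g)$ commutes with every element of $G$, i.e.\ $\pi(g) \in \z(G)$. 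Here I use that every inner automorphism already lies in $\Aut^+(F_2)$ (since $T_2 \leq \Aut^+(F_2)$), so for elements of $T_2$ membership in $\Gamma^+(G,\pi)$ and in $\Gamma(G,\pi)$ coincide. Hence $\ker \Phi = T_2 \cap \Gamma^+(G,\pi)$, and the first isomorphism theorem gives $T_2/(T_2 \cap \Gamma^+(G,\pi)) \cong G/\z(G)$, whence $[T_2 : T_2 \cap \Gamma^+(G,\pi)] = [G : \z(G)]$.

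Substituting this equality into the specialization of Proposition~\ref{intr-prop-index} yields the claimed formula. The main obstacle is not the algebra, which is routine once the correct map $\Phi$ is written down, but the legitimacy of parametrizing $T_2$ entirely by $\Inn(F_2)$: this is precisely where the rank $n=2$ enters in an essential way, through Nielsen's theorem, and it explains why the clean factor $[G:\z(G)]$ is available only for $F_2$ and not in higher rank, where $T_n$ is strictly larger than $\Inn(F_n)$.
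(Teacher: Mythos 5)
Your proposal is correct and follows essentially the same route as the paper: the paper also specializes Proposition~\ref{intr-prop-index} to $n=2$ and proves Lemma~\ref{LemCen}, which exhibits exactly your map $\Phi$ (with target written as $\Inn(G)$ rather than $G/\z(G)$, an immaterial difference) and identifies its kernel with $T_2 \cap \Gamma^+(G,\pi)$ by the same computation. Your explicit remarks on well-definedness via the triviality of $\z(F_2)$ and on $T_2 \leq \Aut^+(F_2)$ are details the paper leaves implicit, but the argument is the same.
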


In Section~\ref{sec-abelian} we use the above result to determine the index of $\Gamma^+(G,\pi)$ in $\Aut^+(F_2)$ for abelian groups $G$ and thus prove Theorem~\ref{thm-abel}. Note that in this case the index depends only on $G$, but not on $\pi$. To see
this, we  prove that for any two epimorphisms $\pi_1,\ \pi_2: F_2
\rightarrow G$, the congruence subgroups $\Gamma^+(G,\pi_1)$ and
$\Gamma^+(G,\pi_2)$ are conjugate in $\Aut^+(F_2)$. Since $G$ is abelian, we have $[G
: \z(G)] = 1$. Hence, by Corollary~\ref{intr-cor-index-n2} we only
need to determine the index $[\SL_2(\Z) : \rho(\Gamma^+(G,\pi))]$
for some convenient choice of $\pi$. We can choose $\pi$ such that
$\rho(\Gamma^+(G,\pi))$ is the classical congruence subgroup
$$\Gamma(m,n)= \{ \left(
\begin{smallmatrix} a & b \\ c & d\end{smallmatrix} \right) \in
\SL_2(\Z)\mid a \equiv_m 1 , b \equiv_m 0  \text{ and } c \equiv_n
0 , d \equiv_n 1 \}$$
with $n \mid m$, whose index is described in Lemma~\ref{ind_gamma}.
From our discussions we can easily derive the index of the
classical congruence subgroup $\Gamma(m,n)$ for arbitrary $m$ and
$n$ (see Section~\ref{sec-abelian}).

Finally, in Section~\ref{sec-dihedral} we consider the case that $G$ is a
dihedral group and prove Theorem~\ref{thm-dieder}. 

\subsection{Conjectures, Remarks and Related Problems}
\begin{enumerate}
\item Let $G$ be the non-abelian semidirect product of two cyclic groups, $G=\Z/p\Z \ltimes \Z/q\Z $, where $p$ and $q$ are primes with $q \equiv_p 1$.
We conjecture that then the index of $\Gamma^+(G,\pi)$ in $\Aut^+(F_2)$ is
$$|G| \cdot [\SL_2(\Z):\Gamma(p,1)]=pq p^2 (1-\frac{1}{p^2})=qp(p^2-1).$$
For $p=2$ this coincides with the formula in Theorem~\ref{thm-dieder}.

\item The congruence subgroup problem: is every finite-index subgroup of $\Aut^+(F_n)$ a congruence subgroup? For $\SL_2(\Z)$ this problem is already solved. The answer is yes for $n \geq 3$ (see \cite{BLS}, \cite{M}) and no for $n = 2$ (see \cite{FK}). However, it is still not clear what the answer for $\Aut^+(F_n)$ should be.

Let us state some partial results for the case $n = 2$. So far we can say that there are finite-index subgroups of $\Aut^+(F_2)$ that do not contain any $\Gamma^+(G,\pi)$, with $G$  abelian or dihedral (see Section~\ref{sec-congr}). However, from Asada's results in \cite{MA} it follows that every finite index subgroup of $\Aut^+(F_2)$ containing $\Inn(F_2)$ is a congruence subgroup. To be more precise, Asada shows that every finite-index subgroup of $\Aut^+(F_2)/ \Inn(F_2) =: \Out^+(F_2)$ contains some group of the form 
$$\ker(\Out^+(F_2) \rightarrow \Out(F_2/K))$$
where $K \leq F_2$ is a characteristic subgroup of $F_2$.\\

\item For which $G$ and $\pi$ is the image $\rho(\Gamma^+(G,\pi))$ a congruence subgroup of $\SL_2(\Z)$? For abelian or dihedral groups $G$ it always is, but in general this is not true. A counterexample is given by $G=A_5$, the alternating group of degree $5$. Moreover, $A_5$ is the smallest group with this property.\\

\item As a generalisation of the abelian case, one might expect that $\rho(\Gamma^+(G,\pi))$ is always a congruence subgroup, if $G$ is solvable. This turns out to be false. We found a solvable group $G$ of order $128$ for which $\rho(\Gamma^+(G,\pi))$ is not a congruence subgroup of $\SL_2(\Z)$ (see Section~\ref{sec-congr} for details). Computational results indicate that $\rho(\Gamma^+(G,\pi))$ is always a congruence subgroup, if $G$ is metabelian.\\

\item The group $\Aut^+(F_2)$ acts in a natural way on the set $\mathbf{R}_2(G):=\{\ker(\pi) \mid \pi: F_2 \rightarrow G \text{ epimorphism} \}$ (see Subsection \ref{secAct}). This leads to a classical question that was first asked by W. Gasch\"utz and B. H. Neumann (1950s): for which finite groups $G$ is this action  transitive?
The answer is of importance to us, because, up to conjugation,
$\Gamma^+(G,\pi)$ depends only on the $\Aut^+(F_2)$-orbit of
$\ker(\pi)$ in $\mathbf{R}_2(G)$. If $G$ is abelian or dihedral,
the action is transitive, but for $G = A_5$ it is not. Indeed,
different choices for $\pi : F_2 \rightarrow A_5$ lead to congruence
subgroups of different indices. See also \cite[Section 9.1]{GL} for
more comments on this problem.
\end{enumerate}
\section{Preliminaries}\label{sec-preliminaries}

\subsection{Congruence Subgroups of $\SL_2(\Z)$}
Let $\pi : F_2 \rightarrow G$ be an epimorphism of the free group $F_2$ onto a finite group $G$. As the image $\rho(\Gamma^+(G,\pi))$ is a finite-index subgroup of $\SL_2(\Z)$, we recall the notation for congruence subgroups of $\SL_2(\Z)$.
For $m,n \in \N$ let
$$\Gamma(m,n) := \{\left( \begin{smallmatrix} a & b \\ c & d\end{smallmatrix} \right) \in \SL_2(\Z) \mid a \equiv_m 1, b \equiv_m 0, c \equiv_n 0, d \equiv_n 1 \}.$$
Then the principal congruence subgroup $\Gamma(m)$ is exactly $\Gamma(m,m)$. One also writes $\Gamma^1(m) := \Gamma(m,1)$ and $\Gamma_1(n) := \Gamma(1,n)$.

In our proofs we need the indices of these subgroups in $\SL_2(\Z)$. They are known for $\Gamma^{1}(m)$, $\Gamma_{1}(m)$ and $\Gamma(m)$ (see for example \cite[1.2]{DSh}):
\begin{align*}
[\SL_2(\Z) : \Gamma^1(m)] = & \ [\SL_2(\Z) : \Gamma_1(m)] =  m^2 \prod_{\substack{p \mid m\\ p \ \mathrm{prime}}} (1-\frac{1}{p^2}),\\
[\SL_2(\Z) : \Gamma(m)] = & \ m^3 \prod_{\substack{p \mid m\\ p \ \mathrm{prime}}} (1-\frac{1}{p^2}).
\end{align*}
However, the literature does not seem to include a formula for the
index of $\Gamma(m,n)$ for general $m,n \in \N$. As we shall see, we only need it for the case
that $n \mid m$ and we provide it in the next lemma. A formula for the index of $
\Gamma(m,n)$ for arbitrary $m$ and $n$ is given at the end of Section
\ref{sec-abelian}.
\begin{lem}\label{ind_gamma}
Let $m,n \in \N$ such that $n \mid m$. Then
$$[\SL_2(\Z) : \Gamma(m,n)] = n m^2 \prod_{p \mid m} (1-\frac{1}{p^2}),$$ where the product runs over all primes $p$ dividing $m$.
\end{lem}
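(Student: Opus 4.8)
The plan is to compute the index of $\Gamma(m,n)$ in $\SL_2(\Z)$ when $n \mid m$ by relating it to the principal congruence subgroup $\Gamma(m)$, whose index is already known. The key observation is that $\Gamma(m,n)$ sits between $\Gamma(m)$ and $\Gamma^1(m) = \Gamma(m,1)$, since $n \mid m$ forces $\Gamma(m) \leq \Gamma(m,n) \leq \Gamma^1(m)$. I would exploit the multiplicativity of the index, writing
\begin{equation*}
[\SL_2(\Z) : \Gamma(m)] = [\SL_2(\Z):\Gamma(m,n)] \cdot [\Gamma(m,n) : \Gamma(m)].
\end{equation*}
Since the first factor on the left is known to be $m^3 \prod_{p \mid m}(1 - \frac{1}{p^2})$, it suffices to compute $[\Gamma(m,n):\Gamma(m)]$ and check that it equals $m/n$, giving the claimed index $n m^2 \prod_{p\mid m}(1-\frac{1}{p^2})$.

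To compute $[\Gamma(m,n):\Gamma(m)]$, I would reduce matrices modulo $m$ and track only the relevant entries. Every $\left(\begin{smallmatrix} a & b \\ c & d\end{smallmatrix}\right) \in \Gamma(m,n)$ satisfies $a \equiv_m 1$, $b \equiv_m 0$, and (since $n \mid m$) the bottom row is constrained only modulo $n$: $c \equiv_n 0$, $d \equiv_n 1$. Membership in the smaller group $\Gamma(m)$ additionally requires $c \equiv_m 0$ and $d \equiv_m 1$. Thus the quotient $\Gamma(m,n)/\Gamma(m)$ is measured by the residues of the bottom-row entries modulo $m$, subject to the determinant condition $ad - bc = 1$. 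Concretely, I would consider the map sending a matrix in $\Gamma(m,n)$ to the pair $(c \bmod m, d \bmod m)$ and argue that its image is a set of cardinality $m/n$: because $a \equiv_m 1$ and $b \equiv_m 0$, the determinant relation forces $d \equiv_m 1$ automatically, leaving $c$ as the only free parameter, ranging over the multiples of $n$ modulo $m$, of which there are exactly $m/n$.

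The main obstacle is verifying that this map is a surjective group homomorphism onto a group of order $m/n$, i.e.\ that every admissible residue of the bottom row is actually attained by some matrix in $\Gamma(m,n)$ and that the kernel is precisely $\Gamma(m)$. The surjectivity amounts to a lifting statement: given prescribed residues modulo $m$ for $c$ and $d$ compatible with the constraints, one must produce an integer matrix of determinant $1$ realising them, which follows from the surjectivity of the reduction $\SL_2(\Z) \to \SL_2(\Z/m\Z)$ together with elementary row operations using the transvection $\left(\begin{smallmatrix} 1 & 0 \\ n & 1\end{smallmatrix}\right) \in \Gamma(m,n)$. Once surjectivity is established, the kernel is by definition the set of matrices in $\Gamma(m,n)$ whose bottom row is congruent to $(0,1)$ modulo $m$, which together with the top-row conditions is exactly $\Gamma(m)$. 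Combining $[\Gamma(m,n):\Gamma(m)] = m/n$ with the known index of $\Gamma(m)$ then yields the formula. An alternative, perhaps cleaner, route would bypass $\Gamma(m)$ entirely by working directly in the finite group $\SL_2(\Z/m\Z)$: here $\Gamma(m,n)$ is the preimage of the subgroup of matrices that are upper unitriangular modulo $m$ and unipotent-compatible modulo $n$, and one counts this subgroup's order directly, dividing into $|\SL_2(\Z/m\Z)| = m^3\prod_{p\mid m}(1-\frac{1}{p^2})$.
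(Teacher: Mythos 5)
Your argument is correct, but it runs in the opposite direction from the paper's. The paper sandwiches from above: it observes that the determinant condition forces every $A \in \Gamma^1(m)$ to be congruent to $\left(\begin{smallmatrix} 1 & 0 \\ * & 1\end{smallmatrix}\right)$ modulo $m$, exhibits the transvections $\left(\begin{smallmatrix} 1 & 0 \\ k & 1\end{smallmatrix}\right)$, $0 \leq k \leq n-1$, as coset representatives of $\Gamma(m,n)$ in $\Gamma^1(m)$, and multiplies the resulting index $n$ by the known index $[\SL_2(\Z):\Gamma^1(m)] = m^2\prod_{p \mid m}(1-\frac{1}{p^2})$. You instead sandwich from below, computing $[\Gamma(m,n):\Gamma(m)] = m/n$ and dividing the known index of the principal congruence subgroup $\Gamma(m)$ by it. Both proofs hinge on the same observation --- that $a \equiv_m 1$ and $b \equiv_m 0$ force $d \equiv_m 1$, leaving the lower-left entry as the only degree of freedom, realised by explicit transvections --- so the difference is essentially which of the two standard index formulas from \cite[1.2]{DSh} is fed in. Your route does require the small extra check that $A \mapsto c \bmod m$ is a homomorphism $\Gamma(m,n) \to n\Z/m\Z$, which holds because the lower-left entry of a product is $ca' + dc' \equiv c + c' \pmod m$ once $a' \equiv d \equiv 1 \pmod m$; the paper's coset count avoids introducing any homomorphism. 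With that one-line verification supplied, your proof is complete, and your suggested alternative of counting orders directly in $\SL_2(\Z/m\Z)$ would also work as the same computation in finite-group language.
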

\begin{proof}
If $A \in \Gamma^1(m)$, then $A \equiv \left( \begin{smallmatrix} 1 & 0 \\ * & 1\end{smallmatrix} \right)$ modulo $m$. Since $n \mid m$ this implies $A \equiv \left( \begin{smallmatrix} 1 & 0 \\ * & 1\end{smallmatrix} \right)$ modulo $n$. It is now easily seen that the matrices $ \left( \begin{smallmatrix}1 & 0 \\ k & 1 \end{smallmatrix} \right)$, $0 \leq k \leq n-1$, provide a coset representative system for $\Gamma(m,n)$ in $\Gamma^1(m)$ so that $[\Gamma^1(m) : \Gamma(m,n)] = n$. The lemma follows.
\end{proof}

\subsection{A Presentation of $\Aut^+(F_2)$}\label{subsec-repres}
We use the fact that the group $\Aut^+(F_2)$ is an extension of $T_2 = \Inn(F_2)$ by $\SL_2(\Z)$, i.e.\ the sequence
$$1  \longrightarrow  T_2  \longrightarrow  \Aut^+(F_2)  \stackrel{\rho}{\longrightarrow}  \SL_2(\Z) \longrightarrow  1$$
is exact. For an element $w \in F_2$ let $\alpha_w \in T_2$ be the inner automorphism of $F_2$ given by $\alpha_w(z) = wzw^{-1}$ for all $z \in F_2.$ The group $T_2$ is free on $\alpha_x$ and $\alpha_y$. Further, the special linear group $\SL_2(\Z)$ has a presentation
$$\SL_2(\Z)=\langle e_1, e_2 \: \vert \: e_2 e^{-1}_1 e_2 e_1 e^{-1}_2 e_1,
(e_2 e^{-1}_1 e_2)^4 \rangle, $$
where $e_1$ and $e_2$ represent $\left( \begin{smallmatrix} 1 & 0 \\ 1 & 1 \end{smallmatrix}\right)$ and
$\left( \begin{smallmatrix} 1 & 1 \\ 0 & 1 \end{smallmatrix}\right)$, respectively. Observe that preimages of $e_1$ and $e_2$ under $\rho$ are given by
$$  u = \begin{cases} x \mapsto xy \\ y \mapsto y \end{cases} \quad \mbox{and} \quad v = \begin{cases} x \mapsto x \\ y \mapsto xy, \end{cases}$$
respectively.
By a result of Hall \cite[Ch.~ 13, Th.~1]{DJ} we can compute the following presentation.
$$
\begin{array}{llll}
\Aut^+(F_2)= \langle \alpha_x, \alpha_y, u, v \: \vert \: 
 &  u \alpha_x u^{-1} = \alpha_x \alpha_y,  & u \alpha_y u^{-1} = \alpha_y, &  \\
 &  v \alpha_x v^{-1} = \alpha_x, & v \alpha_y v^{-1} = \alpha_x \alpha_y, & \\
 &  \multicolumn{2}{l}{v u^{-1} v u v^{-1} u = 1 , }  &  \\
 &  \multicolumn{2}{l}{(v u^{-1} v)^4= \alpha_x \alpha_y^{-1} \alpha_x^{-1} \alpha_y }& \rangle.
 \end{array}
 $$

\subsection{Dependence on the Epimorphism} \label{secAct}
For a finite group $G$ and $n \in \N$ we set
$$\E_n(G) := \{ \pi : F_n \rightarrow G \mid \pi \mbox{ is an epimorphism} \}.$$
Observe that $\E_n(G)$ is a finite set. The group $\Aut(G) \times \Aut(F_n)$ acts on this set by
$$(\phi, \varphi) \cdot \pi := \phi \pi \varphi^{-1} \quad \mbox{for} \quad \, \phi \in \Aut(G), \varphi \in \Aut(F_n), \pi \in \E_n(G).$$
Then $\Gamma(G,\pi)$ is exactly the stabiliser of $\pi$ under the action of $\Aut(F_n)$. Hence the orbit-stabiliser theorem yields
$$[\Aut(F_n) : \Gamma(G,\pi)] = |\Aut(F_n) \cdot \pi|.$$
In particular, $\Gamma(G,\pi)$ has finite index in $\Aut(F_n)$.
Moreover, up to conjugation, $\Gamma(G,\pi)$ only depends on the orbit of $\pi$ under this action.
Since, as it is easily seen, $\Gamma(G,\pi)$ is invariant under the action of $\Aut(G)$, we consider the set $\Aut(G) \backslash \mathbf{E}_n(G)$, which can be naturally identified with
$$\mathbf{R}_n(G) := \{ \ker(\pi) \mid \pi \in \E_n(G) \}.$$ 
The induced action of $\Aut(F_n)$ on this set is given by
$$\varphi \cdot R := \varphi(R) \quad \mbox{for} \quad \varphi \in \Aut(F_n), R \in \mathbf{R}_n(G).$$
Indeed, if $R = \ker(\pi)$, then $\varphi(R) = \ker(\pi \varphi^{-1})$. It follows that, up to conjugation, $\Gamma(G,\pi)$ depends only on the orbit of $\ker(\pi)$ in $\mathbf{R}_n(G)$.
	
We remark that the analogous results to the ones in this subsection also hold for $\Gamma^+(G,\pi)$ and $\Aut^+(F_n)$ replacing $\Gamma(G,\pi)$ and $\Aut(F_n)$, respectively.

\subsection{A Reduction to the Abelian Case}\label{reduc}
As before, let $G$ be a finite group and $\pi:F_n \rightarrow G$ an epimorphism. We naturally obtain an epimorphism $\bar{\pi}: F_n \stackrel{\pi}{\rightarrow} G \rightarrow G/G' = G^{\ab}$. If we have $\pi\varphi = \pi$ for some $\varphi \in \Aut(F_n)$, then clearly $\bar{\pi} \varphi = \bar{\pi}$ so that
$$\Gamma(G,\pi) \leq \Gamma(G^{\ab},\bar\pi).$$

\subsection{Proof of Proposition \ref{intr-prop-index}}\label{index-allgemein}
\begin{lem}
Let $A,B,C$ be groups with subgroups $A_0, B_0, C_0$, respectively. Assume we have a commutative diagram with exact rows
$$ \xymatrix{1 \ar[r] &  A  \ar[r]^\alpha   & B \ar[r]^\beta                    & C \ar[r] & 1\\
             1 \ar[r] & A_0 \ar@{^{(}->}[u] \ar[r]^{\alpha_0} & B_0 \ar@{^{(}->}[u] \ar[r]^{\beta_0} & C_0 \ar[r] \ar@{^{(}->}[u] & 1 }$$
where the homomorphisms from the second row to the first one are the inclusion maps and $\alpha_0$, $\beta_0$ are the restrictions $\alpha|_{A_0}$, $\beta|_{B_0}$, respectively. Assume further that $B_0$ has finite index in $B$. Then the indices $[A:A_0]$ and $[C:C_0]$ are also finite and we have
$$[B:B_0] = [A:A_0] \cdot [C:C_0].$$
\end{lem}

\begin{proof}
This result can be verified by diagram chasing. A complete proof will be contained in the Ph.D. thesis of the first author.
\end{proof}

Let us consider the following commutative diagram where $\rho$ is the representation introduced in Subsection~\ref{comments}.
$$ \xymatrix{1 \ar[r] &  T_n  \ar[r]    & \Aut^+(F_n) \ar[r]^\rho                   & \SL_n(\Z) \ar[r] & 1\\
             1 \ar[r] & T_n \cap \Gamma^+(G,\pi) \ar@{^{(}->}[u] \ar[r] & \Gamma^+(G,\pi) \ar@{^{(}->}[u] \ar[r]^\rho & \rho(\Gamma^+(G,\pi)) \ar[r] \ar@{^{(}->}[u] & 1 }$$
The rows of this diagram are exact and the homomorphisms from the second row to the first one are simply the inclusions. Applying the above lemma to this diagram, we obtain Propostion~\ref{intr-prop-index}.

The following result for the special case $n = 2$ leads to Corollary~\ref{intr-cor-index-n2}. Recall that $\z(G)$ denotes the center of $G$.
\begin{lem}\label{LemCen}
There is an exact sequence
$$ 1 \longrightarrow T_2 \cap \Gamma^+(G,\pi) \longrightarrow T_2 \longrightarrow \Inn(G) \longrightarrow 1.$$
In particular $[T_2 : T_2 \cap \Gamma^+(G,\pi)] = |\Inn(G)| = [G : \z(G)]$.
\end{lem}

\begin{proof}
For $g \in G$ we define $c_g \in \Inn(G)$ by $c_g(h) = ghg^{-1}$ for all $h \in G$. Let $\Phi : T_2 \rightarrow \Inn(G)$ be the homomorphism given by $\Phi(\alpha_z) := c_{\pi(z)}$ for all $z \in F_2$. Since $\pi : F_2 \rightarrow G$ is onto, it follows that $\Phi$ is onto. We now show that $\ker \Phi = T_2 \cap \Gamma^+(G,\pi)$.

Let $\alpha_z \in \ker \Phi$. Then $c_{\pi(z)} = \id_G$, i.e.\ $\pi(z) g \pi(z)^{-1} = g$ for all $g \in G$. Hence $\pi \alpha_z(w) = \pi(z) \pi(w) \pi(z)^{-1} = \pi(w)$ for all $w \in F_2$ so that $\pi \alpha_z = \pi$. This shows that $\alpha_z \in T_2 \cap \Gamma^+(G,\pi)$.

Now suppose that $z \in F_2$ such that $\alpha_z \in T_2 \cap \Gamma^+(G,\pi)$. Then $\pi \alpha_z = \pi$ so that $\pi(z) \pi(w) \pi(z)^{-1} = \pi(w)$ for all $w \in F_2$. Since $\pi$ is onto, it follows that $\pi(z) \in \z(G)$. Hence $c_{\pi(z)} = \id_G$, i.e.\ $\alpha_z \in \ker \Phi$.
\end{proof}

\section{Congruence Subgroups associated to Abelian Groups} \label{sec-abelian}
Let $G$ be a finite abelian group and, as before, $\pi : F_2 \rightarrow G$ an epimorphism. Observe that this implies that $G \cong \Z/m\Z \times \Z / n\Z$ where $n \mid m$. Our aim in this section is to prove Theorem~\ref{thm-abel}. From Corollary~\ref{intr-cor-index-n2} we obtain

\begin{equation}\label{same_index}
[\Aut^+(F_2) : \Gamma^+(G,\pi)] = [\SL_2(\Z) : \rho(\Gamma^+(G,\pi))].
\end{equation}

We thus only have to understand the image $\rho(\Gamma^+(G,\pi))$. It is known that the action of $\Aut(F_2)$ on $\mathbf{R}_2(G)$ is transitive for abelian groups $G$. See for example \cite{NN}. As we shall see now, already the $\Aut^+(F_2)$-action on this set is transitive. Hence we only need to understand $\rho(\Gamma^+(G,\pi))$ for a single epimorphism $\pi$.

\begin{lem}\label{LemTrans}
Let $\pi : F_2 \rightarrow G$ be an epimorphism of $F_2$ onto a finite abelian group. Then $\Aut^+(F_2)$ acts transitively on $\mathbf{R}_2(G)$.

In particular, up to conjugation, $\Gamma^+(G,\pi)$ only depends on $G$ but not on the particular epimorphism $\pi$.
\end{lem}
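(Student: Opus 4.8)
The plan is to reduce the statement to a transitivity assertion for the natural action of $\SL_2(\Z)$ on the set of finite-index sublattices of $\Z^2$ with a prescribed quotient, and then to settle the latter by elementary divisor theory. Since $G$ is abelian, every epimorphism $\pi : F_2 \rightarrow G$ factors through the abelianisation $F_2/F_2' \cong \Z^2$; writing $p : F_2 \rightarrow \Z^2$ for the canonical projection and $\hat\pi : \Z^2 \rightarrow G$ for the induced map, we have $\ker \pi = p^{-1}(\ker\hat\pi)$, and $R \mapsto p(R)$ gives a bijection between $\mathbf{R}_2(G)$ and the set $\mathcal L$ of sublattices $L \leq \Z^2$ with $\Z^2/L \cong G$. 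Because $\pi\varphi^{-1}$ depends on $\varphi$ only through $\rho(\varphi) \in \SL_2(\Z)$, this bijection is equivariant: using that $\rho$ maps $\Aut^+(F_2)$ onto $\SL_2(\Z)$, the $\Aut^+(F_2)$-action on $\mathbf{R}_2(G)$ corresponds to the action $A \cdot L := A(L)$ of $\SL_2(\Z)$ on $\mathcal L$. Thus it suffices to prove that $\SL_2(\Z)$ acts transitively on $\mathcal L$.

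Next I would invoke the elementary divisor theorem (Smith normal form). Writing $G \cong \Z/m\Z \times \Z/n\Z$ with $n \mid m$, every $L \in \mathcal L$ admits a basis of the form $n f_1, m f_2$, where $f_1, f_2$ is a suitable $\Z$-basis of $\Z^2$; here the elementary divisors $n, m$ are forced by the isomorphism type of $\Z^2/L \cong G$. Given two lattices $L_1, L_2 \in \mathcal L$ with associated bases $n f_1, m f_2$ and $n g_1, m g_2$, there is a unique $A \in \GL_2(\Z)$ with $A f_i = g_i$, and then $A(L_1) = L_2$.

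The one point that requires care is that $A$ a priori lies only in $\GL_2(\Z)$, whereas transitivity under $\Aut^+(F_2)$ requires an element of $\SL_2(\Z)$. This is where I expect the only real subtlety to sit, and it is resolved by exploiting the freedom in the choice of lattice basis: since $\langle n g_1 \rangle = \langle -n g_1 \rangle$, replacing $g_1$ by $-g_1$ leaves $L_2$ unchanged but multiplies $\det A$ by $-1$. Hence we may always arrange $\det A = 1$, so that $A \in \SL_2(\Z)$ and $A(L_1) = L_2$. This establishes transitivity on $\mathcal L$, and therefore on $\mathbf{R}_2(G)$.

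Finally, the \emph{in particular} follows formally: by the discussion in Subsection~\ref{secAct}, up to conjugation $\Gamma^+(G,\pi)$ depends only on the $\Aut^+(F_2)$-orbit of $\ker\pi$ in $\mathbf{R}_2(G)$, and transitivity means there is exactly one such orbit, so this conjugacy class depends only on $G$ and not on the chosen epimorphism $\pi$.
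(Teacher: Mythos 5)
Your proof is correct, but it takes a genuinely different route from the paper. The paper argues directly at the level of epimorphisms: it fixes coordinates $\pi(x),\pi(y)\in\Z/kn\Z\times k\Z/kn\Z$, runs a Euclidean algorithm using the explicit generators $u,v$ of $\Aut^+(F_2)$ to bring $\pi$ into a normal form, and then composes with an automorphism of $G$ to reach a fixed base point $\pi_0$; transitivity on $\mathbf{R}_2(G)=\Aut(G)\backslash\E_2(G)$ follows since kernels only see the $\Aut(G)$-orbit. You instead push everything through the abelianisation: the $\Aut^+(F_2)$-set $\mathbf{R}_2(G)$ is identified equivariantly (via surjectivity of $\rho$ onto $\SL_2(\Z)$) with the set of sublattices $L\leq\Z^2$ with $\Z^2/L\cong G$, and transitivity becomes the Smith normal form statement that any such $L$ has an adapted basis $nf_1,mf_2$ with invariant factors forced by $G$; the passage from $\GL_2(\Z)$ to $\SL_2(\Z)$ by replacing $g_1$ with $-g_1$ is exactly the right fix and is the only delicate point. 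Your argument is shorter, handles the cyclic and non-cyclic cases uniformly (the paper only writes out the case $1\neq n\mid m$ and asserts the cyclic case is similar), and makes conceptually transparent why only the isomorphism type of $G$ matters; the paper's computation, on the other hand, is constructive in the generators $u,v$, which is of a piece with the explicit generator manipulations it needs later in the dihedral section. Both are complete proofs, and your reduction to $\SL_2(\Z)$ acting on lattices is a clean alternative.
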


\begin{proof}
We only prove the lemma for $G \cong \Z /m \Z \times \Z / n\Z$ with $1 \not= n\mid m$. The proof for cyclic groups is very similar.

Observe that if $\pi(x)=g_1$ and $\pi(y) = g_2$, then
$$\pi u(x) = g_1 g_2, \quad \pi u(y) = g_2,\quad \pi v(x) = g_1, \quad \pi v(y) = g_1 g_2.$$

Let us recall the basic fact that for $a,b \in \Z$, we have $\langle [a], [b] \rangle = \langle [\gcd(a,b)] \rangle$, where $[z]$ denotes the image of an integer $z$ in $\Z / m\Z$. By a slight abuse of notation we shall omit the brackets $[\ ]$ in what follows.

We write $m = kn$. Note that $G \cong \Z / kn\Z \times k\Z / kn \Z$. Let $\pi : F_2 \rightarrow \Z / kn \Z \times k\Z / kn\Z$ be an epimorphism. Write
$$\pi(x) = \left(\begin{smallmatrix} a \\ b  \end{smallmatrix}\right) \quad \mbox{and} \quad \pi(y) = \left(\begin{smallmatrix} c \\ d  \end{smallmatrix}\right).$$
It suffices to show that $\pi$ lies in the same $\Aut^+(F_2) \times \Aut(G)$-orbit as $\pi_0$ where
$$\pi_0(x) = \left(\begin{smallmatrix} 1 \\ 0  \end{smallmatrix}\right) \quad \mbox{and} \quad \pi_0(y) = \left(\begin{smallmatrix} 0 \\ k  \end{smallmatrix}\right).$$
Observe that $\langle a, c \rangle = \Z/kn\Z$. Using $u$ and $v$ (see Subsection~\ref{subsec-repres}) we can thus apply an Euclidean algorithm to $a$ and $c$ to obtain some $\varphi \in \Aut^+(F_2)$ such that
$$\pi \varphi (x) = \left(\begin{smallmatrix} \varepsilon \\ b' \end{smallmatrix}\right) \quad \mbox{and} \quad \pi \varphi (y) = \left(\begin{smallmatrix} 0 \\ d' \end{smallmatrix}\right)$$
with $\varepsilon \in (\Z / kn \Z)^*$. Now observe that $\langle \left(\begin{smallmatrix} \varepsilon \\ b'  \end{smallmatrix}\right), \left(\begin{smallmatrix} 0 \\ d'  \end{smallmatrix}\right)\rangle = \Z / kn\Z \times k\Z / kn \Z$. In particular there are $\alpha_1, \alpha_2 \in \Z / kn \Z$ such that $\alpha_1 \left(\begin{smallmatrix} \varepsilon \\ b'  \end{smallmatrix}\right) + \alpha_2 \left(\begin{smallmatrix} 0 \\ d'  \end{smallmatrix}\right) = \left(\begin{smallmatrix} 0 \\ k  \end{smallmatrix}\right)$. For these we find $\alpha_1 \varepsilon = 0$ so that $\alpha_1 = 0$. Moreover this shows that $\alpha_2 d' = k$. Hence $\langle d' \rangle = k\Z / kn\Z$, i.e.\ $\ord(d') = n$. We can thus find a suitable power $u^e$ of $u$ such that
$$\pi \varphi u^e (x) = \left(\begin{smallmatrix} \varepsilon \\ 0 \end{smallmatrix}\right) \quad \mbox{and} \quad \pi \varphi u^e (y) = \left(\begin{smallmatrix} 0 \\ d' \end{smallmatrix}\right).$$
Since $\ord(\varepsilon) = kn = \ord(1)$ and $\ord(d') = n = \ord(k)$ we can define an automorphism $\phi$ of $\Z / kn\Z \times k\Z / kn\Z$ by $\phi ( \left(\begin{smallmatrix} \varepsilon \\ 0  \end{smallmatrix}\right) )= \left(\begin{smallmatrix} 1 \\ 0  \end{smallmatrix}\right)$ and $\phi ( \left(\begin{smallmatrix} 0 \\ d'  \end{smallmatrix}\right) )= \left(\begin{smallmatrix} 0 \\ k  \end{smallmatrix}\right)$. Then $\phi \pi \varphi u^e = \pi_0$ and the lemma follows.
\end{proof}

For cyclic groups we shall choose the epimorphism
$$\pi : F_2 \longrightarrow \Z / m\Z, \quad x \longmapsto 1,\quad y \longmapsto 0.$$
It is easily seen that then
\begin{equation}\label{image1}
\rho(\Gamma^+(\Z /m \Z, \pi)) = \Gamma^1(m).
\end{equation}
For groups of the form $\Z / m \Z \times \Z/ n \Z$ where $n \mid m$ we choose
$$\pi : F_2 \longrightarrow \Z / m \Z \times \Z/ n \Z,\quad x \longmapsto \left(\begin{smallmatrix} 1 \\ 0  \end{smallmatrix}\right), \quad y \longmapsto \left(\begin{smallmatrix} 0 \\ 1  \end{smallmatrix}\right).
$$
Then
\begin{equation}\label{image2}
\rho(\Gamma^+(\Z /m\Z \times \Z/n\Z,\pi)) = \Gamma(m,n).
\end{equation}
We can now easily obtain Theorem~\ref{thm-abel} as follows. By the above lemma, $[\Aut^+(F_2) : \Gamma^+(G,\pi)]$ is independent of the choice of $\pi$. Moreover, by (\ref{same_index}) this index is equal to $[\SL_2(\Z) : \rho(\Gamma^+(G,\pi))]$. Lemma~\ref{ind_gamma} together with (\ref{image1}) and (\ref{image2}) provides the desired formulas.

The results in this section lead to a general formula for the indices of the congruence subgroups $\Gamma(a,b)$ with arbitrary $a, b \in \N$. 
\begin{cor}Let $a,b \in \N$. Then
$$[\SL_2(\Z): \Gamma(a,b)]=  n m^2 \prod_{\substack{p \mid m\\ p \ \mathrm{prime}}} (1-\frac{1}{p^2}),$$ 
where $m=\lcm(a,b)$, $n=\gcd(a,b)$. 
\end{cor}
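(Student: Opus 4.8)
The plan is to realise $\Gamma(a,b)$ as the $\rho$-image of a congruence subgroup of $\Aut^+(F_2)$ attached to the abelian group $\Z/a\Z \times \Z/b\Z$, and then to read off the index from Theorem~\ref{thm-abel}. The decisive structural fact is that $\Z/a\Z \times \Z/b\Z$ is already a finite abelian group whose invariant factor decomposition is $\Z/m\Z \times \Z/n\Z$ with $m = \lcm(a,b)$ and $n = \gcd(a,b)$; since $n \mid m$, this is exactly the normal form to which Theorem~\ref{thm-abel} applies.

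First I would fix the coordinate epimorphism $\pi : F_2 \rightarrow \Z/a\Z \times \Z/b\Z$, $x \mapsto \left(\begin{smallmatrix} 1 \\ 0 \end{smallmatrix}\right)$, $y \mapsto \left(\begin{smallmatrix} 0 \\ 1 \end{smallmatrix}\right)$, exactly as in the derivation of (\ref{image2}), and verify that $\rho(\Gamma^+(\Z/a\Z \times \Z/b\Z, \pi)) = \Gamma(a,b)$. This is the same computation that produces (\ref{image2}), and it never invokes a divisibility relation between the two moduli: writing $M = \rho(\varphi) = \left(\begin{smallmatrix} a_{11} & a_{12} \\ a_{21} & a_{22} \end{smallmatrix}\right)$ and using that $\pi$ factors through the abelianisation $F_2 \rightarrow \Z^2$, so that $\pi \varphi$ depends only on $M$, the condition $\pi \varphi = \pi$ translates into $a_{11} \equiv_a 1$, $a_{12} \equiv_a 0$, $a_{21} \equiv_b 0$, $a_{22} \equiv_b 1$, which is precisely membership in $\Gamma(a,b)$. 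Surjectivity onto $\Gamma(a,b)$ is then immediate: $\rho : \Aut^+(F_2) \rightarrow \SL_2(\Z)$ is onto, and any preimage of a matrix satisfying these congruences automatically fixes $\pi$, since $\pi$ factors through the abelianisation.

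With this identification, equation (\ref{same_index}) gives $[\SL_2(\Z) : \Gamma(a,b)] = [\Aut^+(F_2) : \Gamma^+(\Z/a\Z \times \Z/b\Z, \pi)]$. Applying Theorem~\ref{thm-abel} to the finite abelian group $\Z/a\Z \times \Z/b\Z \cong \Z/m\Z \times \Z/n\Z$ with $n \mid m$ then yields $[\Aut^+(F_2) : \Gamma^+(\Z/a\Z \times \Z/b\Z, \pi)] = n m^2 \prod_{p \mid m}(1 - \frac{1}{p^2})$, and combining the two equalities proves the corollary. The underlying computation is routine; the only points that genuinely require care are verifying that the coordinate epimorphism produces exactly $\Gamma(a,b)$ \emph{without} a divisibility hypothesis on $a,b$, and recognising that one must rewrite $\Z/a\Z \times \Z/b\Z$ in invariant factor form $\Z/m\Z \times \Z/n\Z$ before quoting Theorem~\ref{thm-abel}. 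As a byproduct, since conjugate congruence subgroups upstairs have conjugate $\rho$-images (Lemma~\ref{LemTrans}), one sees that $\Gamma(a,b)$ and $\Gamma(m,n)$ are in fact conjugate in $\SL_2(\Z)$, which re-explains the coincidence of their indices.
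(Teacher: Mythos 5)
Your proof is correct and follows essentially the same route as the paper: identify $\Gamma(a,b)$ as $\rho(\Gamma^+(\Z/a\Z\times\Z/b\Z,\pi))$ for the coordinate epimorphism, observe that $\Z/a\Z\times\Z/b\Z\cong\Z/m\Z\times\Z/n\Z$ with $n\mid m$, and invoke the independence of the index from the choice of epimorphism (Lemma~\ref{LemTrans}) together with Lemma~\ref{ind_gamma}, which is exactly the content of Theorem~\ref{thm-abel} that you quote. The only difference is presentational: the paper reduces directly to $[\SL_2(\Z):\Gamma(m,n)]$ and cites Lemma~\ref{ind_gamma}, while you pass through $[\Aut^+(F_2):\Gamma^+(G,\pi)]$ via (\ref{same_index}) and cite Theorem~\ref{thm-abel}; these are the same chain of equalities traversed in a different order.
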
 

\begin{proof} 
The group $\Gamma(a,b)$ occurs as the image under $\rho$ of $\Gamma^+(G,\pi)$ where $G=\Z /a\Z \times \Z /b\Z$. 
Since $G\cong \Z /m\Z \times \Z /n\Z$, we obtain $[\SL_2(\Z): \Gamma(a,b)]= [\SL_2(\Z): \Gamma(m,n)]$. Now apply Lemma~\ref{ind_gamma}.
\end{proof}

\section{Congruence Subgroups associated to Dihedral Groups}\label{sec-dihedral}
Let $n \geq 3$. A presentation of the dihedral group $D_n$ is given by
$$D_n = \langle r,s \mid r^n = 1,\ s^2 = 1,\ rs = sr^{-1} \rangle.$$
The group contains exactly $2n$ elements, namely
$$1, r, r^2,\dots,r^{n-1},s,sr,sr^2,\dots,sr^{n-1}.$$
If $n$ is odd, the center $\z(D_n)$ of $D_n$ is trivial. For even $n$ its center has order $2$ and we have $\z(D_n) = \langle r^{\frac{n}{2}} \rangle$.
We choose the epimorphism
$$ \pi_0 : F_2  \longrightarrow  D_n,\quad x \longmapsto r,\ y \longmapsto s$$
and consider $\Gamma^+(D_n,\pi_0)$. By the following result this already covers the general case.
\begin{lem} \label{Dn-pi-unabh}
The action of $\Aut^+(F_2)$ on the set $\mathbf{R}_2(D_n)$ is transitive.
\end{lem}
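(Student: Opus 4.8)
The plan is to show that any epimorphism $\pi : F_2 \to D_n$ lies in the same $\Aut^+(F_2) \times \Aut(D_n)$-orbit as the chosen epimorphism $\pi_0$, so that the induced action on $\mathbf{R}_2(D_n)$ is transitive. This mirrors the strategy of Lemma~\ref{LemTrans}: I will act on an arbitrary $\pi$ by elements of $\Aut^+(F_2)$ (built from $u$ and $v$, whose effect on $\pi(x), \pi(y)$ is recorded explicitly in the proof of Lemma~\ref{LemTrans}) and by automorphisms of $D_n$, reducing $\pi$ to a normal form. Throughout I will use that $D_n$ has the abelianization $D_n^{\ab} \cong \Z/2\Z$ for odd $n$ and $\Z/2\Z \times \Z/2\Z$ for even $n$, together with the reduction to the abelian case from Subsection~\ref{reduc}.

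First I would classify the images $\pi(x)$ and $\pi(y)$. Since $D_n$ is generated by two elements and every element is either a rotation $r^i$ or a reflection $sr^i$, and since two rotations generate only a cyclic (proper) subgroup, a generating pair must contain at least one reflection. The composition $\bar\pi : F_2 \to D_n^{\ab}$ controls which of the generators map to reflections: the reflections constitute the nontrivial coset(s) of $\langle r \rangle$. I would split into two regimes. When $n$ is odd, $D_n^{\ab} \cong \Z/2\Z$, and a generating pair must have at least one reflection, so after possibly swapping $x$ and $y$ (realized by an element of $\Aut^+(F_2)$, e.g. a suitable word in $u,v$) and applying an inner automorphism of $D_n$, I can assume $\pi(y) = s$; the remaining generator then maps either to a rotation or a reflection, and I reduce $\pi(x)$ to $r$ using that $\gcd$ of the rotation exponents must be coprime to $n$ (else the image is not all of $D_n$) plus an outer automorphism $r \mapsto r^k$ of $D_n$. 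When $n$ is even, $D_n^{\ab} \cong \Z/2\Z \times \Z/2\Z$, and surjectivity onto the abelianization forces $\bar\pi(x), \bar\pi(y)$ to be two distinct nontrivial elements; this pins down which generators are reflections versus odd rotations, and again I normalize via $\Aut^+(F_2)$-moves and an automorphism of $D_n$ to reach $\pi_0$.

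The main technical engine, as in Lemma~\ref{LemTrans}, is the Euclidean-algorithm reduction on the rotation part: the moves $u$ and $v$ replace a pair of rotation exponents $(a,c)$ by $(a+c, c)$ or $(a, a+c)$ (up to the reflection bookkeeping), allowing me to drive one exponent to a generator of $\langle r \rangle$ and the other to $0$, after which an automorphism $r \mapsto r^{\varepsilon}$ with $\varepsilon \in (\Z/n\Z)^*$ cleans up the unit. The subtlety absent from the abelian case is that conjugating a reflection $sr^i$ by $r$ shifts $i$ by $2$, and the $u,v$ action intertwines the reflection and rotation coordinates nontrivially; I expect \textbf{the main obstacle} to be tracking these reflection exponents carefully through the algorithm and verifying that the normalizing automorphism of $D_n$ simultaneously sends both normalized images to their $\pi_0$-values. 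I would handle this by always first using an inner automorphism of $D_n$ to fix the reflection generator to $s$ and only then running the Euclidean reduction on the rotation generator, so the two coordinates decouple cleanly and the final identification with $\pi_0$ is immediate.
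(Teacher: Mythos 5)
Your overall strategy---normalize an arbitrary $\pi$ inside its $\Aut^+(F_2)\times\Aut(D_n)$-orbit to the fixed $\pi_0$---is exactly the paper's, and your case split by which of $\pi(x),\pi(y)$ are reflections matches the paper's three types $\pi_1,\pi_2,\pi_3$. But the paper's proof is much shorter because the ``main technical engine'' you propose is not needed: a generating pair of $D_n$ contains at most one rotation, so there is never a pair of rotation exponents on which to run a Euclidean algorithm. Once the pair has the form $(r^k,sr^l)$, the relations $(r^k)^n=(sr^l)^2=1$ and $r^ksr^l=sr^l(r^k)^{-1}$ show that $r\mapsto r^k$, $s\mapsto sr^l$ extends to an endomorphism of $D_n$, which is onto (hence an automorphism) precisely because $\pi$ is onto; this single observation replaces all of your exponent bookkeeping, gcd arguments, and the final ``cleanup'' automorphism. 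The other two types are handled by one $\Aut^+(F_2)$-move each: $x\mapsto y^{-1},\ y\mapsto x$ for the (reflection, rotation) case, and $u$ for the (reflection, reflection) case, since $sr^k\cdot sr^l=r^{l-k}$ is a rotation.

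Two points in your write-up need repair if you execute it as stated. First, the literal swap $x\leftrightarrow y$ has determinant $-1$ on the abelianization and so is \emph{not} in $\Aut^+(F_2)$; you must use the signed swap $x\mapsto y^{-1},\ y\mapsto x$ (which is what the paper does, and suffices since $\pi(y)^{-1}$ is a reflection iff $\pi(y)$ is). Second, for $n$ even an \emph{inner} automorphism of $D_n$ cannot move $sr^l$ to $s$ when $l$ is odd: conjugation by any element of $D_n$ changes the exponent of a reflection only by an even amount, and $sr^{\mathrm{odd}}$, $sr^{\mathrm{even}}$ are distinct conjugacy classes. This is harmless for the lemma---$\mathbf{R}_2(D_n)$ is the quotient of $\E_2(D_n)$ by all of $\Aut(D_n)$, and the outer automorphism $r\mapsto r$, $s\mapsto sr^{-l}$ does the job---but you should invoke a general automorphism of $D_n$ rather than an inner one at that step.
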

\begin{proof}
An arbitrary epimorphism of $F_2$ onto $D_n$ can have one of the following forms.
\begin{align*}
\pi_1 :     F_2 &\rightarrow D_n \quad  & \pi_2 : F_2   &\rightarrow D_n  \quad & \pi_3 : F_2 &\rightarrow D_n\\
                  x &\mapsto r^k                        &          x        &\mapsto sr^k                   & x   &\mapsto sr^k\\
                  y &\mapsto sr^l                   &          y        &\mapsto  r^l                   & y   &\mapsto sr^l
\end{align*}
with suitable $k,l \in \Z$. Let us first consider the type $\pi_1$. Observe that $(r^k)^n = (sr^l)^2 = 1$ and $r^k sr^l = sr^l (r^k)^{-1}$. We may thus define an endomorphism $\phi : D_n \rightarrow D_n$ by $\phi(r) := r^k$ and $\phi(s) := sr^l$. Since $\langle r^k, sr^l \rangle = D_n$, this endomorphism is onto. Hence $\phi$ is an automorphism of $D_n$. It follows that $\pi_1 = \phi\pi_0$ and hence $\ker(\pi_1) = \ker(\pi_0)$. Now we consider an epimorphism of the form $\pi_2$. Let $\varphi$ be the automorphism of $F_2$ given by $\varphi(x) := y^{-1}$ and $\varphi(y) := x$. Then $\varphi \in \Aut^+(F_2)$ and $\pi_2\varphi(x) = r^{-l}$, $\pi_2\varphi(y) = sr^k$. Now $\pi_2\varphi$ is an epimorphism of the form $\pi_1$. Hence $\ker(\pi_2\varphi) = \ker(\pi_0)$, that is $\ker(\pi_2) = \varphi(\ker(\pi_0))$. Let $u \in \Aut^+(F_2)$ as in Subsection~\ref{subsec-repres}. Observe that $\pi_3u (x) = r^{l-k}$ and $\pi_3u(y)=sr^l$ so that $\pi_3u$ is again of the form $\pi_1$. We can thus argue as before.
\end{proof}

Let us now consider the index $[\Aut^+(F_2) : \Gamma^+(D_n,\pi)]$. By Corollary~\ref{intr-cor-index-n2} we have
$$[\Aut^+(F_2) : \Gamma^+(D_n,\pi)] = [\SL_2(\Z) : \rho(\Gamma^+(D_n,\pi))] \cdot [D_n : \z(D_n)].$$ Note that
$$[D_n : \z(D_n)] = \begin{cases}2n, \mbox{ if $n$ is odd}\\ n, \mbox{ if $n$ is even.} \end{cases}$$
Next we show that the image $\rho(\Gamma^+(D_n,\pi)) $ is conjugate to $ \Gamma_1(2)$, if $n$ is odd and to $\Gamma(2),$ if $n$ is even. As before, let $\pi_0 : F_2 \rightarrow D_n$ be the epimorphism defined by $ \pi_0(x)=r$ and $\pi_0(y)=s$. Lemma~\ref{Dn-pi-unabh} yields that every $\Gamma^+(D_n,\pi)$ is conjugate to $\Gamma^+(D_n,\pi_0)$, so we only need to consider the image of $\Gamma^+(D_n,\pi_0)$ under $\rho$.

Let $u$, $v$ and $\alpha_x \in \Aut^+(F_2)$ be as defined in Subsection~\ref{subsec-repres}.  Observe that the following automorphisms are in $\Gamma^+(D_n,\pi_0)$:
\begin{align*}
u^2 &= \begin{cases} x \mapsto xy^2 \\ y \mapsto y \end{cases}  \quad&
v^n &= \begin{cases} x \mapsto x \\ y \mapsto x^ny \end{cases} \\
\alpha^{-1}_x v^2&= \begin{cases} x \mapsto x \\ y \mapsto xyx \end{cases}  \quad&
\alpha^{-1}_x (u^{-1} v)^3 &= \begin{cases} x \mapsto y^{-1}x^{-1}y \\ y \mapsto y^{-1} \end{cases}
\end{align*}
The images of the above automorphisms under $\rho$ are given by
$\left( \begin{smallmatrix} 1 & 0 \\ 2 & 1 \end{smallmatrix}\right)$,
$\left( \begin{smallmatrix} 1 & n \\ 0 & 1 \end{smallmatrix}\right)$,
$\left(\begin{smallmatrix} 1 & 2 \\ 0 & 1 \end{smallmatrix}\right)$ and
$\left(\begin{smallmatrix} -1 & 0 \\ 0 & -1 \end{smallmatrix}\right)$, respectively.
For $n$ odd we thus have
$$\rho(\Gamma^+(D_n,\pi_0)) \geq \langle
\left( \begin{smallmatrix} 1 & 0 \\ 2 & 1 \end{smallmatrix}\right),\
\left( \begin{smallmatrix} 1 & 1 \\ 0 & 1 \end{smallmatrix}\right) \rangle = \Gamma_1(2)$$
and for $n$ even we have
$$\rho(\Gamma^+(D_n,\pi_0)) \geq \langle
\left( \begin{smallmatrix} -1 & 0 \\ 0 & -1 \end{smallmatrix}\right),\
\left( \begin{smallmatrix} 1 & 0 \\ 2 & 1 \end{smallmatrix}\right),\
\left( \begin{smallmatrix} 1 & 2 \\ 0 & 1 \end{smallmatrix}\right) \rangle = \Gamma(2).$$
Moreover we know from  Subsection~\ref{reduc} that
$\rho(\Gamma^+(D_n,\pi_0)) $ is a subgroup of $\rho(\Gamma^+(D_n^{\ab},\bar{\pi}_0)) $, where  $\bar{\pi}_0$ is the epimorphism $\pi_0$ followed by the natural projection onto the abelian quotient $D_n^{\ab}=D_n/ D'_n$. We have
\begin{align*}
D_n^{\ab} &= \langle \bar s \mid 2\bar s = 0 \rangle \cong \Z/2\Z, \quad &\mbox{for $n$ odd},\\
D_n^{\ab} &= \langle \bar r, \bar s \mid 2\bar r = 0, 2\bar s = 0, \bar r+\bar s=\bar s+r \rangle \cong (\Z/2\Z)^2 \quad &\mbox{for $n$ even}
\end{align*}
where $\bar r$ and $\bar s $ are the images of $r$ and $s$ in $D_n^{\ab}$. From Section~\ref{sec-abelian} we know $\rho(\Gamma^+(\Z/2\Z,\bar{\pi}_0))= \Gamma_1(2)$ and $\rho(\Gamma^+((\Z/2\Z)^2,\bar{\pi}_0))= \Gamma(2)$ and hence
$$\rho(\Gamma^+(D_n,\pi_0)) = \begin{cases}\Gamma_{1}(2), \mbox{ if $n$ is odd}\\
\Gamma(2), \mbox{ if $n$ is even.}\end{cases}$$
By Lemma~\ref{ind_gamma} we thus have
$$[\SL_2(\Z) : \rho(\Gamma^+(D_n,\pi))] = \begin{cases} 3, \mbox{ if $n$ is odd} \\ 6, \mbox{ if $n$ is even.} \end{cases}$$
Altogether we find that $[\Aut^+(F_2) : \Gamma^+(D_n,\pi_0)] = 6n$, which proves the first part of Theorem~\ref{thm-dieder}.

In the above calculation we used four automorphisms contained in $\Gamma^+(D_n,\pi_0)$. Now we show that these actually generate $\Gamma^+(D_n,\pi_0)$, thereby proving the second part of Theorem~\ref{thm-dieder}.

\begin{prop}
The group $\Gamma^+(D_n,\pi_0)$ is generated by the four automorphisms $u^2,\;  v^n,\; \alpha^{-1}_x v^2,$ and $ \alpha^{-1}_x (u^{-1} v)^3$.
\end{prop}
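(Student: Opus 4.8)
The plan is to establish the Proposition by exploiting the extension
$$1\longrightarrow T_2\cap\Gamma^+(D_n,\pi_0)\longrightarrow\Gamma^+(D_n,\pi_0)\stackrel{\rho}{\longrightarrow}\rho(\Gamma^+(D_n,\pi_0))\longrightarrow 1$$
rather than running the Reidemeister--Schreier process directly on the $6n$ cosets. Write $H=\Gamma^+(D_n,\pi_0)$ and let $K=\langle u^2,\,v^n,\,\alpha_x^{-1}v^2,\,\alpha_x^{-1}(u^{-1}v)^3\rangle$; since all four generators were shown to lie in $H$, we have $K\le H$. I would first record the elementary reduction: \emph{if} $K\le H$, $\rho(K)=\rho(H)$ and $T_2\cap H\subseteq K$, \emph{then} $K=H$. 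Indeed, given $h\in H$ one picks $k\in K$ with $\rho(k)=\rho(h)$, whence $k^{-1}h\in T_2\cap H\subseteq K$ and so $h\in K$. It therefore remains to verify the two conditions $\rho(K)=\rho(H)$ and $T_2\cap H\subseteq K$.

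The first condition is already contained in the computation preceding this Proposition. Under $\rho$ the four generators map to $\left(\begin{smallmatrix}1&0\\2&1\end{smallmatrix}\right)$, $\left(\begin{smallmatrix}1&n\\0&1\end{smallmatrix}\right)$, $\left(\begin{smallmatrix}1&2\\0&1\end{smallmatrix}\right)$ and $\left(\begin{smallmatrix}-1&0\\0&-1\end{smallmatrix}\right)$, and these were seen to generate $\Gamma_1(2)=\rho(H)$ when $n$ is odd and $\Gamma(2)=\rho(H)$ when $n$ is even. Hence $\rho(K)=\rho(H)$, and the whole burden falls on showing $T_2\cap H\subseteq K$.

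By Lemma~\ref{LemCen} and its proof, $T_2\cap H=\ker\Phi$ is the kernel of the surjection $\Phi\colon T_2=\langle\alpha_x,\alpha_y\rangle\to\Inn(D_n)$ given by $\alpha_x\mapsto c_r$, $\alpha_y\mapsto c_s$; it is a normal subgroup of the free group $T_2$ of index $[D_n:\z(D_n)]$. Since $\Inn(D_n)$ is dihedral on the images of $\alpha_x,\alpha_y$, the group $\ker\Phi$ is the normal closure in $T_2$ of the three relators $\alpha_x^{e}$, $\alpha_y^{2}$ and $(\alpha_y\alpha_x)^{2}$, where $e=n$ for $n$ odd and $e=n/2$ for $n$ even. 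I would begin by placing these seed relators inside $K$. For the first, the defining relation $v\alpha_x v^{-1}=\alpha_x$ shows that $\alpha_x$ and $v$ commute, so $(\alpha_x^{-1}v^2)^{k}=\alpha_x^{-k}v^{2k}$; taking $k=n/2$ (resp.\ $k=n$) and cancelling a power of $v^n$ gives $\alpha_x^{e}\in K$. For the second, using $u\alpha_x u^{-1}=\alpha_x\alpha_y$, $u\alpha_y u^{-1}=\alpha_y$ together with $g:=\alpha_x^{-1}(u^{-1}v)^3$, which satisfies $g\alpha_y g^{-1}=\alpha_y^{-1}$, one computes the commutator identity $[u^2,g]=\alpha_y^{-2}$, so $\alpha_y^{2}\in K$; the third relator should fall to an analogous, if longer, commutator computation.

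It then remains to propagate these finitely many seeds to their full $T_2$-normal closure inside $K$. Here I would use that $K$ normalizes $T_2\cap H$ (being a subgroup of $H$, in which $T_2\cap H$ is normal), and that the conjugation action of $u^2,\ \alpha_x^{-1}v^2,\ v^n$ and $g$ on the free group $T_2$ is given explicitly, via $\varphi\alpha_z\varphi^{-1}=\alpha_{\varphi(z)}$, by the automorphisms $\alpha_x\mapsto\alpha_x\alpha_y^2$; $\alpha_y\mapsto\alpha_x\alpha_y\alpha_x$; $\alpha_y\mapsto\alpha_x^n\alpha_y$; and $\alpha_x\mapsto\alpha_y^{-1}\alpha_x^{-1}\alpha_y,\ \alpha_y\mapsto\alpha_y^{-1}$, respectively. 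Running Reidemeister--Schreier for $\ker\Phi\le T_2$ with the Schreier transversal $\{\alpha_x^{i},\,\alpha_y\alpha_x^{i}\}$ produces explicit Schreier generators of $T_2\cap H$, and one would express each as a word in the four generators by pushing the seed relators around with the conjugations above and inducting on the transversal index $i$. The main obstacle is precisely this last step: controlling the roughly $[D_n:\z(D_n)]$ Schreier generators \emph{uniformly in $n$}, while keeping the two regimes apart (trivial centre for odd $n$, centre of order two for even $n$). Everything preceding it is a finite check independent of $n$.
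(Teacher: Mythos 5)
Your overall architecture is the same as the paper's: split $\Gamma^+(D_n,\pi_0)$ along the exact sequence $1\to T_2\cap H\to H\to\rho(H)\to 1$, observe that the four elements already surject onto $\rho(H)$, and reduce everything to showing $T_2\cap H\subseteq K$. That reduction, the identification $\rho(K)=\rho(H)$, the description of $T_2\cap H=\ker\Phi$ as the normal closure in $T_2$ of $\alpha_x^{e}$, $\alpha_y^{2}$, $(\alpha_y\alpha_x)^2$ (with $e=n$ or $n/2$), and your two seed computations $\alpha_x^{e}=(\alpha_x^{-1}v^2)^{\pm\ast}v^{\mp\ast}$ and $\alpha_{y^2}=[\gamma_1,\gamma_4]$ are all correct and match identities appearing in the paper's proof.

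The genuine gap is the step you yourself flag as ``the main obstacle'': passing from the three seeds to all of $T_2\cap H$. This cannot be dispatched by the remark that $K$ normalizes $T_2\cap H$ -- the normal closure is taken in $T_2$, and $T_2\not\leq K$, so closure of $K$ under its own conjugation gives you nothing about conjugates $\alpha_x^{i}\,s\,\alpha_x^{-i}$ or $\alpha_y\alpha_x^{i}\,s\,\alpha_x^{-i}\alpha_y^{-1}$. One really must exhibit, for each of the roughly $2[D_n:\z(D_n)]$ transversal conjugates (equivalently, for each of the $2n+1$ resp.\ $n+1$ Schreier generators of the free group $T_2\cap H$), an explicit word in $\gamma_1,\dots,\gamma_4$; and the conjugation by $\gamma_3=\alpha_x^{-1}v^2$ is a \emph{twisted} substitute for conjugation by $\alpha_x$ (it sends $\alpha_{y^2}$ to $\alpha_{xyx^2yx}$, not to $\alpha_{xy^2x^{-1}}$), so the induction on the transversal index $i$ is exactly where the content lies and is not routine. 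The paper closes precisely this gap by listing closed-form identities uniform in $n$ and $k$, e.g.\ $\alpha_{x^kyx^{k-n}y}=\gamma_3^k\gamma_4\alpha_{y^2}^{-1}\alpha_{x^n}\gamma_3^{-k}\gamma_4$ and $\alpha_{yx^kyx^{k-n}}=\alpha_{y^2}\gamma_4\gamma_3^k\gamma_4^{-1}\gamma_3^{-k}\alpha_{x^n}^{-1}$, which simultaneously supply your unverified third seed $(\alpha_y\alpha_x)^2$ (via $k=1$ and $\alpha_{x^n}$). Until you produce such identities, or an equally explicit induction, the proof is not complete.
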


\begin{proof}
The main strategy of the proof is to compute generators of $\Gamma^+(D_n,\pi_0)$ using the Reidemeister method \cite[Theorem~2.7]{MKS} and then show that each generator can be written as a product of $u^2$, $v^n$, $\alpha^{-1}_x v^2$ and $\alpha^{-1}_x (u^{-1} v)^3$.

Recall the following exact sequence
$$1 \longrightarrow  T_2 \cap \Gamma^+(D_n,\pi_0) \longrightarrow  \Gamma^+(D_n,\pi_0) \stackrel{\rho}{\longrightarrow}  \rho(\Gamma^+(D_n,\pi_0)) \longrightarrow 1.$$
By this sequence $\Gamma^+(D_n,\pi)$ is generated by the generators of $ T_2 \cap \Gamma^+(D_n,\pi_0)$ together with preimages of the generators of $\rho(\Gamma^+(D_n,\pi_0))$.

We first consider the case where $n$ is odd. In this case the center of $D_n$ is trivial. So $\Inn(D_n) \cong D_n$ and thus Lemma~\ref{LemCen} yields an isomorphism
$$T_2 \cap \Gamma^+(D_n,\pi) \backslash T_2 \stackrel{\cong}{\longrightarrow} D_n,\quad [\alpha_w] \longmapsto \pi(w).$$
Hence a set of right coset representatives of $T_2 \cap \Gamma^+(D_n,\pi)$ in $ T_2$ is given by
$$\id_{F_2},\ \alpha_x,\ \alpha_{x^2},\ \dots,\  \alpha_{x^{n-1}},\  \alpha_y,\ \alpha_{yx},\ \dots,\ \alpha_{yx^{n-1}}.$$
We can now use the Reidemeister method to find that $T_2 \cap \Gamma^+(D_n,\pi)$ is freely generated by
\begin{align*}
&\alpha_{x^n},\ \alpha_{y^2},\ \alpha_{yx^ny^{-1}},\\
&\alpha_{x^kyx^{k-n}y},\ \alpha_{yx^kyx^{k-n}}\ (1 \leq k \leq n-1).
\end{align*}
In the above computation we already showed that
$$\rho(\Gamma^+(D_n,\pi_0))= \Gamma_{1}(2)=\langle \left( \begin{smallmatrix} 1 & 0 \\ 2 & 1 \end{smallmatrix}\right),\
\left( \begin{smallmatrix} 1 & 1 \\ 0 & 1 \end{smallmatrix}\right)\rangle.$$
Let
$$\varphi_{1} = \begin{cases} x \mapsto xy^2 \\ y \mapsto y \end{cases} \quad \mbox{and} \quad
\varphi_{2} = \begin{cases} x \mapsto x \\ y \mapsto x^{\frac{1-n}{2}}yx^{\frac{n+1}{2}} \end{cases}$$
so that $\rho(\varphi_{1})=\left( \begin{smallmatrix} 1 & 0 \\ 2 & 1 \end{smallmatrix}\right)$ and
$\rho(\varphi_{2})=\left( \begin{smallmatrix} 1 & 1 \\ 0 & 1 \end{smallmatrix}\right)$.
An easy computation shows that these are elements of $\Gamma^+(D_n,\pi_0)$.
Hence $\Gamma^+(D_n,\pi_0)$ is generated by
\begin{align*}
&\varphi_{1},\ \varphi_{2},\ \alpha_{x^n},\ \alpha_{y^2},\ \alpha_{yx^ny^{-1}},\\
&\alpha_{x^kyx^{k-n}y},\ \alpha_{yx^kyx^{k-n}}\ (1 \leq k \leq n-1).
\end{align*}
To ease notation we set
$$\gamma_1:=u^2,  \quad \gamma_2:=v^n, \quad \gamma_3:=\alpha^{-1}_x v^2 \quad \mbox{and}\quad \gamma_4:=\alpha^{-1}_x (u^{-1} v)^3.$$
It is elementary to verify that
$$\begin{array}{lllllll}
\varphi_{1} & = & \gamma_1,  &  &
\alpha_{x^n} & = & \gamma^2_2 \gamma^{-n}_3, \\
\varphi_{2} & = &  \gamma^{-1}_2 \gamma_3^{\frac{n+1}{2}},  &  &
\alpha_{y^2} & = &\gamma^{-1}_1  \gamma^{-1}_4  \gamma_1  \gamma_4 ,\\
\alpha_{x^kyx^{k-n}y}& = & \gamma^k_3\gamma_4 \alpha^{-1}_{y^2} \alpha_{x^n}\gamma^{-k}_3\gamma_4,& &
\alpha_{yx^ny^{-1}}  & =& \gamma^{-1}_4\alpha^{-1}_{y^2}  \alpha^{-1}_{x^n}   \alpha_{y^2} \gamma_4 ,\\
\alpha_{yx^kyx^{k-n}}& =&\alpha_{y^2}\gamma_4 \gamma^k_3 \gamma^{-1}_4 \gamma^{-k}_3 \alpha^{-1}_{x^n}. & & & & \\
\end{array}$$

Now we consider the case where $n$ is even. In this case the center of $D_n$ is cyclic of order 2, generated by $r^{\frac{n}{2}}. $
By Lemma~\ref{LemCen} we have an isomorphism
$$T_2 \cap \Gamma^+(D_n,\pi) \backslash T_2 \stackrel{\cong}{\longrightarrow} \z(D_n)  \backslash D_n \cong D_{\frac{n}{2}}.$$
Analogous to the previous case we obtain that $T_2 \cap \Gamma^+(D_n,\pi)$ is freely generated by
\begin{align*}
&\alpha_{x^{\frac{n}{2}}},\ \alpha_{y^2},\ \alpha_{yx^{\frac{n}{2}}y^{-1}},\\
&\alpha_{x^kyx^{k-\frac{n}{2}}y},\ \alpha_{yx^kyx^{k-\frac{n}{2}}}\ (1 \leq k \leq \frac{n}{2}-1).
\end{align*}
Furthermore, we have seen above that
$$\rho(\Gamma^+(D_n,\pi_0))=
\Gamma(2)=\langle
\left( \begin{smallmatrix} 1 & 0 \\ 2 & 1 \end{smallmatrix}\right),
\left( \begin{smallmatrix} 1 & 2 \\ 0 & 1 \end{smallmatrix}\right),
\left( \begin{smallmatrix} -1 & 0 \\ 0 & -1 \end{smallmatrix}\right)
\rangle.$$
The automorphisms
$$\varphi_{1} = \begin{cases} x \mapsto xy^2 \\ y \mapsto y \end{cases}, \quad
\varphi_{3} = \begin{cases} x \mapsto x \\ y \mapsto xyx \end{cases} \quad \mbox{and} \quad
\varphi_{4} = \begin{cases} x \mapsto y^{-1}x^{-1}y \\ y \mapsto y^{-1} \end{cases}$$
are in $\Gamma^+(D_n,\pi_0)$ and also preimages of the generators of $\Gamma(2)$.
So $\Gamma^+(D_n,\pi)$ is generated by
\begin{align*}&\varphi_{1},\
\varphi_{3},\
\varphi_{4},\
\alpha_{x^{\frac{n}{2}}},\
\alpha_{y^2},\
\alpha_{yx^{\frac{n}{2}}y^{-1}},\\
&\alpha_{x^kyx^{k-\frac{n}{2}}y},\
\alpha_{yx^kyx^{k-\frac{n}{2}}}\ (1 \leq k \leq \frac{n}{2}-1).
\end{align*}
Similarly to the previous case we can write
$$\begin{array}{lllllll}
\varphi_{1} & = & \gamma_1, & \quad  & \alpha_{x^{\frac{n}{2}}} & = & \gamma_2 \gamma^{-\frac{n}{2}}_3 , \\
\varphi_{3} & = & \gamma_3, &  & \alpha_{y^2}  & = & \gamma^{-1}_1  \gamma^{-1}_4  \gamma_1  \gamma_4 , \\
\varphi_{4} & = & \gamma_4, &  & \alpha_{yx^{\frac{n}{2}}y^{-1}}  & = & \gamma^{-1}_4\alpha^{-1}_{y^2}  \alpha^{-1}_{x^{\frac{n}{2}}}   \alpha_{y^2} \gamma_4,\\
& & & & \alpha_{x^kyx^{k-\frac{n}{2}}y} & = & \gamma^k_3\gamma_4 \alpha^{-1}_{y^2} \alpha_{x^{\frac{n}{2}}}\gamma^{-k}_3\gamma_4,\\
& & & & \alpha_{yx^kyx^{k-\frac{n}{2}}}  & = & \alpha_{y^2}\gamma_4 \gamma^k_3 \gamma^{-1}_4 \gamma^{-k}_3 \alpha^{-1}_{x^{\frac{n}{2}}}. 
\end{array}$$
This completes the proof.
\end{proof}

\section{A Remark on the Congruence Subgroup Problem}\label{sec-congr}
Let $G$ be a finite group and $\pi:F_2 \rightarrow G$ be an epimorphism. As we have seen in Sections 3 and 4, the image $\rho(\Gamma^+(G,\pi))$ is a congruence subgroup of $\SL_2(\Z)$, if $G$ is abelian or dihedral. One might expect that, more general, $\rho(\Gamma^+(G,\pi))$ is a congruence subgroup if $G$ is solvable. We now show that this is false. 
\begin{prop}
There is a solvable group $G$ and an epimorphism $\pi:F_2 \rightarrow G$ such that $\rho(\Gamma^+(G,\pi))$ is not a congruence subgroup of $\SL_2(\Z)$.
\end{prop}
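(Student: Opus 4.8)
The plan is to produce the required example by an explicit computation, so the proof splits into two tasks: first, identify $\rho(\Gamma^+(G,\pi))$ with a concrete finite-index subgroup $H \leq \SL_2(\Z)$; second, verify by the classical criterion of Wohlfahrt that $H$ is not a congruence subgroup. For the group $G$ I would take a suitable solvable $2$-group of order $128$ together with a fixed epimorphism $\pi : F_2 \to G$; since most small solvable groups --- in particular all abelian and, by Section~\ref{sec-dihedral}, all dihedral ones --- yield congruence subgroups, the genuine work is to locate one that does not, which I expect to require a computer search over the groups of order $128$.

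The key reduction for the first task is the following reinterpretation of $\rho(\Gamma^+(G,\pi))$. Let $\SL_2(\Z)$ act on the finite set $X := \Inn(G) \backslash \E_2(G)$ of generating pairs of $G$ taken up to inner automorphism, where $u$ and $v$ from Subsection~\ref{subsec-repres} send the class of $\pi$ --- with $\pi(x) = g_1$, $\pi(y) = g_2$ --- to the classes of the pairs $(g_1 g_2, g_2)$ and $(g_1, g_1 g_2)$, respectively. Because the inner automorphisms $T_2 = \Inn(F_2)$ move $\pi$ only within its $\Inn(G)$-class (compare the identity $\pi\alpha_z = c_{\pi(z)}\pi$ from Lemma~\ref{LemCen}), this action descends from $\Aut^+(F_2)$ to $\SL_2(\Z)$, and a short argument shows $\rho(\Gamma^+(G,\pi)) = \operatorname{Stab}_{\SL_2(\Z)}([\pi])$. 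I would therefore enumerate the orbit of $[\pi]$ under the two moves above, read off the resulting permutation representation $\SL_2(\Z) \to S_N$ with $N = [\SL_2(\Z):H]$, and extract explicit generators of the point stabiliser $H$ by the Reidemeister--Schreier method \cite{MKS}, exactly as in the dihedral case.

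For the second task I would apply Wohlfahrt's theorem: if $n$ denotes the level of $H$, that is the least common multiple of the cusp widths --- equivalently the cycle lengths of $\left(\begin{smallmatrix} 1 & 1 \\ 0 & 1\end{smallmatrix}\right)$ acting on the $N$ cosets --- then $H$ is a congruence subgroup if and only if $\Gamma(n) \leq H$. To detect the failure of this inclusion I would compute the image $\bar H$ of $H$ under reduction $\SL_2(\Z) \to \SL_2(\Z/n\Z)$ and compare indices: since $H\,\Gamma(n)$ is the full preimage of $\bar H$, one has $[\SL_2(\Z):H\,\Gamma(n)] = [\SL_2(\Z/n\Z):\bar H]$, and $H$ is congruence precisely when $[\SL_2(\Z):H] = [\SL_2(\Z/n\Z):\bar H]$. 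Exhibiting for our chosen $(G,\pi)$ that the left-hand index strictly exceeds the right-hand one then proves that $H = \rho(\Gamma^+(G,\pi))$ is not a congruence subgroup.

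The computations above are finite but sizeable, and I expect the main obstacle to be twofold. Conceptually, the search itself is the crux: one must sift through the solvable groups of small order to find a single counterexample, all smaller solvable --- and empirically all metabelian --- groups producing congruence subgroups, so there is no a priori reason to expect the phenomenon before order $128$. Technically, once $G$ is fixed, the orbit $X$ and the level $n$ may be moderately large, so the identification of $H$ and the index comparison inside $\SL_2(\Z/n\Z)$ are best carried out and certified by machine. I would record the explicit $G$, the epimorphism $\pi$, a generating set of $H$, the level $n$, and the two unequal indices, so that the non-congruence conclusion can be verified directly.
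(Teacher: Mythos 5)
Your proposal follows the same overall route as the paper: both proofs are computational existence arguments that locate a non-metabelian solvable $2$-group $G$ of order $128$ (all smaller solvable two-generated groups being metabelian and yielding congruence images), compute the finite-index subgroup $H=\rho(\Gamma^+(G,\pi))$ explicitly, determine its level $n$, and then show $\Gamma(n)\not\leq H$ via the Wohlfahrt-type criterion (the paper cites the $\SL_2$ version from Grunewald--Schwermer, whose definition of level via the normal closure of $e_2^a$ agrees with your lcm-of-cycle-lengths description). The two implementations differ in both sub-steps, and your choices are arguably cleaner. For identifying $H$, the paper generates $\Gamma^+(G,\pi)$ itself by collecting random automorphisms fixing $\pi$ until the subgroup they generate attains the index $6144$ predicted by the orbit--stabiliser count on $\E_2(G)$, and then pushes the generators through $\rho$; you instead observe that $H$ is exactly the point stabiliser of $[\pi]$ for the $\SL_2(\Z)$-action on $\Inn(G)\backslash\E_2(G)$ (correct: the stabiliser of $[\pi]$ in $\Aut^+(F_2)$ is $T_2\,\Gamma^+(G,\pi)$, whose $\rho$-image is $H$), which yields $H$ deterministically by Reidemeister--Schreier from a permutation representation. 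For certifying non-congruence, the paper exhibits a single explicit matrix of $\Gamma(8)$ not lying in $H$ (the level there is $8$), whereas you compare $[\SL_2(\Z):H]$ with $[\SL_2(\Z/n\Z):\bar H]$; both are valid certificates, yours being a global index check and the paper's a single concrete witness. The one caveat is that your write-up is a verified plan rather than a completed proof: like the paper's, it stands or falls with the machine computation, and the concrete data ($G$, $\pi$, generators of $H$, the level, and the two indices) must actually be recorded for the argument to be checkable.
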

A connection to the congruence subgroup problem for $\Aut^+(F_2)$ is given by
\begin{cor}There is a finite-index subgroup of $\Aut^+(F_2)$ which does not contain any $\Gamma^+(G,\pi)$, where $G$ is abelian or dihedral.
\end{cor}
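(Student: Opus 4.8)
The plan is to take the congruence subgroup $\Gamma^+(G_0,\pi_0)$ itself as the witness, where $G_0$ and $\pi_0$ are the solvable group and epimorphism furnished by the preceding Proposition. This is a finite-index subgroup of $\Aut^+(F_2)$ by Subsection~\ref{secAct}, and by the Proposition its image $H := \rho(\Gamma^+(G_0,\pi_0))$ is not a congruence subgroup of $\SL_2(\Z)$.

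Next I would argue by contradiction. Suppose $\Gamma^+(G_0,\pi_0)$ contained some $\Gamma^+(G,\pi)$ with $G$ abelian or dihedral. Since $\rho$ is monotone with respect to inclusion of subgroups, this would give
$$\rho(\Gamma^+(G,\pi)) \leq \rho(\Gamma^+(G_0,\pi_0)) = H.$$
By the results established in Sections~\ref{sec-abelian} and~\ref{sec-dihedral}, the image $\rho(\Gamma^+(G,\pi))$ is a congruence subgroup of $\SL_2(\Z)$ whenever $G$ is abelian or dihedral, so it contains some principal congruence subgroup $\Gamma(m)$. Then $\Gamma(m) \leq H$, whence $H$ is a congruence subgroup, contradicting the Proposition. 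Note that this argument is uniform over all abelian and dihedral $G$ and all epimorphisms $\pi$ simultaneously, which is exactly what the statement requires.

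The one step to flag explicitly is that the congruence subgroups of $\SL_2(\Z)$ form an upward-closed family: any subgroup containing some $\Gamma(m)$ is by definition a congruence subgroup. This is precisely what lets the non-congruence assertion of the Proposition yield the desired non-containment. Beyond citing the Proposition there is no real obstacle here; the substantive content --- exhibiting the order-$128$ group for which $\rho(\Gamma^+(G_0,\pi_0))$ escapes the congruence topology --- is already carried out in the proof of the Proposition.
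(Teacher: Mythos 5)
Your proof is correct and is exactly the (implicit) argument the paper intends: the witness is $\Gamma^+(G_0,\pi_0)$ for the order-$128$ solvable group, and the deduction goes through monotonicity of $\rho$ plus the fact, recorded at the start of Section~\ref{sec-congr}, that $\rho(\Gamma^+(G,\pi))$ is a congruence subgroup of $\SL_2(\Z)$ for $G$ abelian or dihedral. The paper leaves this deduction unwritten, and your version, including the remark that congruence subgroups form an upward-closed family, fills it in correctly.
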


Let us explain how one can verify the above proposition. All computations in what follows were carried out by MAGMA. Computer experiments show that for $G$ solvable of order less than $128$, the image $\rho(\Gamma^+(G,\pi))$ is a congruence subgroup of $\SL_2(\Z)$. Note that these groups $G$ are metabelian. There are exactly four non metabelian solvable groups of order $128$ which can be generated by two elements. One of them, call it $G$, admits a presentation on the generators $g_1$, $g_2$, $g_3$, $g_4$, $g_5$, $g_6$, $g_7$ subject to the following relations
\begin{center}
$\begin{array}{llll}
g_1^2 = g_4 ,&g_2^{g_1} = g_2   g_3  ,&g_5^{g_2} = g_5   g_7,  & g_6^{g_5} = g_6, \\ 
g_2^2 = 1 ,&g_3^{g_1} = g_3   g_5  ,&g_5^{g_3} = g_5   g_7,  &g_7^{g_1} = g_7,  \\ 
g_3^2 = 1 ,&g_3^{g_2} = g_3  ,&g_5^{g_4} = g_5   g_7,  &g_7^{g_2} = g_7,  \\ 
g_4^2 = 1 ,&g_4^{g_1} = g_4  ,&g_6^{g_1} = g_6   g_7,  &g_7^{g_3} = g_7,  \\ 
g_5^2 = g_7 ,&g_4^{g_2} = g_4   g_5   g_7  ,&g_6^{g_2} = g_6   g_7,  & g_7^{g_4} = g_7, \\ 
g_6^2 = 1 ,&g_4^{g_3} = g_4   g_6   g_7  ,&g_6^{g_3} = g_6,  &g_7^{g_5} = g_7,  \\ 
g_7^2 = 1 ,&g_5^{g_1} = g_5   g_6  ,&g_6^{g_4} = g_6,  & g_7^{g_6} = g_7.
\end{array} $
\end{center}
One can verify that $G$ is generated by $g_1$ and $g_2$. The  commutator subgroup $G'$ is generated by $g_3, g_5, g_6, g_7 $. Further, $[G',G']$ is generated by $ g_7 $. Hence $G$ is solvable and has derived length $3$. 
We choose the epimorphism
$$\pi: F_2  \longrightarrow  G,\quad x \longmapsto g_1,\ y \longmapsto g_2.$$
Now we compute generators of $\Gamma^+(G,\pi)$. To this end we choose random elements $\varphi \in \Aut^+(F_2)$ and and collect those for which $\pi \varphi = \pi$ in a set $M$ until $M$ generates a finite-index subgroup of $\Aut^+(F_2)$. Let $u$, $v$, $\alpha_x$, $\alpha_y \in \Aut^+(F_2)$ be as in Subsection \ref{subsec-repres} and set $p:=\alpha_x^{-1} u^{-1}vu^{-1}$ and $q:= \alpha_x^{-2}u^{-1}vu^{-2}$. By the above process, we obtain $[\Aut^+(F_2): \langle M\rangle ] = 6144$ where $M$ is the set given in Table~\ref{Table}.
\begin{table}
\begin{footnotesize}
$\begin{array}{ll}
(p   q ^{-1})^4,\quad 
(q ^{-1}   p)^2,\\

(p ^{-1}   q ^{-1}   p^2)^4,\quad
p ^{-2}   q ^{-1}   p   q ^{-1}   p ^{-1},\\

p ^{-1}   q ^{-1}   p ^{-1}   q   p ^{-1}   q   p ^{-1}   q ^{-1}   p ^{-1}   q   p ^{-2}   q   p^2   q   p   q ^{-1}   p ^{-1}   q   p ^{-1}   q   p ^{-1}   q   p   q ^{-1},\\

p ^{-1}   q ^{-1}   p ^{-1}   q   p   q ^{-1}   p   q ^{-1}   p   q ^{-1}   p^2   q   p ^{-1}   q   p ^{-1}   q   p ^{-1}   q ^{-1}   p   q   p ^{-1},\\

q   p   q   p ^{-1}   q ^{-1}   p   q ^{-1}   p ^{-1}   q   p ^{-2}   q ^{-1}   p   q ^{-1}   p   q   p   q   p   q   p   q   p   q ^{-1}   p^2,\\

p ^{-1}   q ^{-1}   p ^{-1}   q   p ^{-1}   q   p ^{-1}   q   p ^{-2}   q ^{-1}   p^2   q   p ^{-1}   q   p ^{-1}   q   p ^{-1}   q   p   q ^{-1},\\

q   p ^{-1}   q ^{-1}   p ^{-1}   q ^{-1}   p   q ^{-1}   p ^{-1}   q   p ^{-2}   q   p ^{-1}   q   p^2   q   p ^{-1}   q   p ^{-1}   q ^{-1}   p ^{-1}   q ^{-1},\\

p ^{-1}   q ^{-1}   p ^{-1}   q   p   q ^{-1}   p   q ^{-1}   p   q   p ^{-1}   q   p   q   p   q   p   q ^{-1}   p   q ^{-1}   p   q   p   q ^{-1},\\

q ^{-1}   p ^{-1}   q   p ^{-1}   q ^{-1}   p ^{-1}   q   p ^{-1}   q ^{-1}   p   q ^{-1}   p   q   p^2   q   p   q ^{-1}   p ^{-1}   q   p   q ^{-1}   p ^{-1}   q ^{-1},\\

q   p ^{-1}   q ^{-1}   p ^{-1}   q ^{-1}   p ^{-1}   q   p   q ^{-1}   p^2   q   p ^{-1}   q ^{-1}   p   q   p ^{-1}   q   p   q ^{-1},\\

p ^{-1}   q ^{-1}   p   q   p ^{-1}   q ^{-1}   p ^{-1}   q   p ^{-2}   q   p ^{-2}   q   p^2   q ^{-1}   p^2   q ^{-1}   p^2   q ^{-1}   p   q   p   q ^{-1}   p ^{-1}   q   p ^{-1},\\

q   p ^{-1}   q ^{-1}   p ^{-1}   q ^{-1}   p   q   p   q ^{-1}   p^2   q   p ^{-1}   q ^{-1}   p   q   p ^{-1}   q   p ^{-1}   q ^{-1},\\

p ^{-2}   q   p ^{-1}   q ^{-1}   p ^{-1}   q   p   q   p ^{-1}   q   p   q   p   q   p   q   p   q ^{-1}   p ^{-1}   q   p,\\

p ^{-1}   q ^{-1}   p   q   p ^{-1}   q ^{-1}   p ^{-1}   q   p   q ^{-1}   p ^{-2}   q   p ^{-1}   q ^{-1}   p^2   q ^{-1}   p^2   q ^{-1}   p^2   q ^{-1}   p ^{-1}   q   p   q ^{-1}   p   q  p,\\

p ^{-1}   q ^{-1}   p ^{-1}   q ^{-1}   p ^{-1}   q   p ^{-1}   q   p   q ^{-1}   p   q   p   q   p ^{-1}   q ^{-1}   p   q   p ^{-1}   q ^{-1}   p ^{-1}   q   p,\\

q   p   q   p ^{-1}   q ^{-1}   p ^{-1}   q   p ^{-2}   q ^{-1}   p   q   p   q ^{-1}   p^2   q ^{-1}   p^2   q ^{-1}   p   q   p   q ^{-1}   p   q,\\

p ^{-2}   q   p   q ^{-1}   p ^{-1}   q   p ^{-1}   q   p ^{-1}   q   p^2   q ^{-1}   p   q ^{-1}   p   q ^{-1}   p   q   p ^{-1}   q ^{-1},\\

p ^{-1}   q ^{-1}   p ^{-1}   q   p   q   p ^{-1}   q ^{-1}   p ^{-1}   q   p^2   q ^{-1}   p   q   p ^{-1}   q ^{-1}   p   q ^{-1}   p ^{-1}   q   p,\\

p ^{-1}   q ^{-1}   p ^{-1}   q   p ^{-1}   q   p   q ^{-1}   p ^{-1}   q   p^2   q ^{-1}   p   q   p ^{-1}   q ^{-1}   p ^{-1}   q ^{-1}   p   q   p,\\

p ^{-1}   q ^{-1}   p ^{-1}   q   p ^{-1}   q   p   q ^{-1}   p ^{-2}   q ^{-1}   p ^{-1}   q   p   q ^{-1}   p^2   q ^{-1}   p   q ^{-1}   p   q   p   q   p,\\

q   p ^{-1}   q ^{-1}   p ^{-1}   q ^{-1}   p   q   p ^{-1}   q   p ^{-1}   q   p ^{-1}   q ^{-1}   p   q   p ^{-1}   q ^{-1},\\

q   p   q ^{-1}   p ^{-1}   q   p   q   p ^{-1}   q   p   q   p   q   p   q   p   q   p   q ^{-1}   p ^{-1}   q ^{-1},\\

q ^{-1}   p ^{-1}   q   p ^{-1}   q   p ^{-1}   q   p ^{-1}   q   p ^{-1}   q   p ^{-1}   q   p ^{-1}   q   p ^{-1}   q ^{-1},\\

p ^{-1}   q ^{-1}   p ^{-1}   q   p ^{-1}   q ^{-1}   p   q   p   q ^{-1}   p   q ^{-1}   p^2   q   p ^{-1}   q   p ^{-1}   q ^{-1}   p ^{-1}   q   p   q ^{-1}   p ^{-1}   q   p,\\

q   p ^{-1}   q ^{-1}   p ^{-1}   q   p ^{-1}   q   p ^{-1}   q ^{-1}   p ^{-1}   q   p   q   p   q ^{-1}   p ^{-1}   q   p ^{-1}   q   p ^{-1}   q ^{-1}   p   q   p,\\

p ^{-1}   q ^{-1}   p ^{-1}   q   p ^{-1}   q ^{-1}   p ^{-1}   q   p ^{-1}   q   p ^{-1}   q   p ^{-1}   q ^{-1}   p   q ^{-1}   p   q   p,\\

p ^{-1}   q ^{-1}   p   q   p   q ^{-1}   p ^{-1}   q   p ^{-1}   q   p ^{-1}   q   p   q ^{-1}   p ^{-1}   q ^{-1}   p   q   p,\\

q   p   q   p ^{-1}   q ^{-1}   p   q   p ^{-1}   q ^{-1}   p ^{-1}   q ^{-1}   p   q   p^2   q ^{-1}   p   q   p ^{-1}   q   p   q ^{-1}   p   q ^{-1}   p ^{-1}   q   p
\end{array}$\vspace{1ex}\\
\end{footnotesize}
\caption{The Elements of $M$.\label{Table}}
\end{table}
Since, by construction, $\langle M \rangle \leq \Gamma^+(G,\pi)$, we have $[\Aut^+(F_2): \Gamma^+(G,\pi) ] \leq 6144$. We can compute the length of the  orbit of $\pi$ under the $\Aut^+(F_2)$-action on the set of epimorphisms $\E_2(G)$ (see Section \ref{secAct}) to obtain 
$$[\Aut^+(F_2):\Gamma^+(G,\pi) ] = |\Aut^+(F_2) \cdot \pi| = 6144$$
This shows that $\langle M \rangle = \Gamma^+(G,\pi)$. It is now easily verifed that $\rho(\Gamma^+(G,\pi))$ is generated by the elements given in Table~\ref{Table2}. Here $e_1$ and $e_2$ are the generators of $\SL_2(\Z)$ given in Subsection~\ref{subsec-repres}.

\begin{table}
\begin{footnotesize}
$\begin{array}{lll}
&e_2   e_1^2   e_2^3   e_1   e_2^2   e_1^{-1}, &e_2^4,\\
&e_2^{-2}   e_1^{-1}   e_2^{-6}   e_1   e_2^{-1}   e_1   e_2^{-1}   e_1   e_2^{-1}   e_1, 	&e_2^{-4},\\
&e_2^2   e_1   e_2^6   e_1   e_2^{-1}   e_1   e_2^{-1}   e_1   e_2^{-1}   e_1^{-1}, 	&(e_2   e_1   e_2)^4,\\
&e_2^{-1}   e_1^{-5}   e_2^{-1}   e_1^{-1}   e_2^{-1}   e_1, &e_2^{-2}   e_1^{-1}   e_2^{-4}   e_1^{-1}   e_2^{-2}   e_1^2,\\
&e_2   e_1   e_2   e_1   e_2^2   e_1   e_2^2   e_1   e_2   e_1   e_2^{-1}   e_1   e_2^{-1}   e_1   e_2^{-1}   e_1^{-1}, &e_1^2,\\
&e_2^{-1}   e_1^{-1}   e_2^{-2}   e_1^{-4}   e_2^{-2}   e_1^{-1}   e_2^{-1}   e_1   e_2^{-1}   e_1   e_2^{-1}   e_1   e_2^{-1},	&e_2^{-1}   e_1^{-6}   e_2^{-1}   e_1   e_2^{-1}   e_1   e_2^{-1}   e_1   e_2^{-1},\\
&e_2   e_1   e_2^6   e_1   e_2   e_1   e_2^{-1}   e_1   e_2^{-1}   e_1   e_2^{-1}, 	&e_2^{-2}   e_1^{-7}   e_2^{-2}   e_1   e_2^{-1}   e_1   e_2^{-1}   e_1   e_2^{-1}   e_1,\\
&e_2^{-1}   e_1^{-1}   e_2^{-1}   e_1^{-3}   e_2^{-3}   e_1   e_2^{-1}   e_1   e_2^{-1}   e_1   e_2^{-1}   e_1,
&e_1^{-1}   e_2^{-1}   e_1^{-2}   e_2^{-1}   e_1   e_2^{-1}   e_1   e_2^{-1}   e_1   e_2^{-1}   e_1,\\
&e_2^{-1}   e_1^{-6}   e_2^{-1}   e_1   e_2^{-1}   e_1   e_2^{-1}   e_1   e_2^{-1}   e_1^2
\end{array}$\vspace{1ex}\\
\end{footnotesize}
\caption{Generators of $\rho(\Gamma^+(G,\pi))$. \label{Table2}}
\end{table}

Let us assume that $\rho(\Gamma^+(G,\pi))$ is a congruence subgroup. Then we can determine the level of $\rho(\Gamma^+(G,\pi))$, which is by \cite[Lemma~2.3]{GS} the smallest positive integer $a$ such that $\rho(\Gamma^+(G,\pi))$ contains the normal closure $\langle e^a_2\rangle^{\SL_2(\Z)}$. Clearly we have $\langle e^a_2\rangle^{\SL_2(\Z)} \leq \rho(\Gamma^+(G,\pi))$ if and only if $s e^a_2 s^{-1} \in \rho(\Gamma^+(G,\pi))$, where $s$ runs through a set of coset representatives of $\rho(\Gamma^+(G,\pi))$ in $\SL_2(\Z)$. By an easy MAGMA computation we obtain the level $a=8$. Now \cite[Theorem~2.5]{GS} implies that $\rho(\Gamma^+(G,\pi))$ contains the principal congruence subgroup $\Gamma(8)$. However, 
$e_1^{-1}   e_2^{-1}   e_1^{-2}   e_2^{-2}   e_1^{-11}   e_2^{-3}   e_1^{-1}   e_2   e_1^{-1}   e_2   e_1^{-1}   e_2   e_1^{-1}   e_2=
\left(\begin{smallmatrix} -327 & -80 \\ 560 & 137\end{smallmatrix}\right)$ is obviously an element of $\Gamma(8)$ but not contained in $\rho(\Gamma^+(G,\pi))$. Hence $\Gamma(8) \not\leq \rho(\Gamma^+(G,\pi))$, contradiction. It follows that $\rho(\Gamma^+(G,\pi))$ cannot be a congruence subgroup of $\SL_2(\Z)$.

\bibliographystyle{plain}

\end{document}